\numberwithin{equation}{section}
\theoremstyle{plain}%
\newtheorem{theorem}{Theorem}
\numberwithin{theorem}{section}
\newtheorem{proposition}[theorem]{Proposition}
\newtheorem{example}[theorem]{Example}
\newtheorem{corollary}[theorem]{Corollary}
\newtheorem{remark}[theorem]{Remark}
\newtheorem{conjecture}[theorem]{Conjecture}
\newcommand{\C}{\mathbb{C}}
\newcommand{\PP}{\mathbb{P}}
\newcommand{\R}{\mathbb{R}}
\newcommand{\sS}{\mathcal{S}}
\newcommand{\sF}{\mathcal{F}}
\newcommand{\sP}{\mathcal{P}}
\newcommand{\sH}{\mathcal{H}}
\newcommand{\sL}{\mathcal{L}}
\newcommand{\sM}{\mathcal{M}}
\newcommand{\bS}{\mathbb{S}}
\date{}
\begin{document}

\title{\bf Maximum Likelihood for \\ Matrices with Rank Constraints}

\author{Jonathan Hauenstein \and Jose Rodriguez \and Bernd Sturmfels}

\maketitle

 \begin{abstract}
 \noindent
 Maximum likelihood estimation is a fundamental optimization problem in statistics. We study this
 problem on manifolds of matrices with bounded rank. These represent mixtures of
distributions of two independent discrete random variables. We determine the maximum likelihood degree
for a range of determinantal varieties, and we apply numerical algebraic geometry to compute all
critical points of their likelihood functions.
This led to the discovery of maximum likelihood duality between matrices
of complementary ranks, a result proved
subsequently by Draisma and Rodriguez.
         \end{abstract}

 \section{Introduction}

Maximum likelihood estimation (MLE) is a fundamental computational task in statistics.
A typical problem encountered in its applications is the occurrence of
multiple local maxima. In order to be certain that a global maximum
of the likelihood function has
been achieved, one needs to locate all solutions to a
system of polynomial equations.
In this paper we study these equations for  two discrete random variables,
 having $m$ and $n$ states respectively.
A joint probability distribution for  two such random variables is written as
an $m \times n$-matrix:
\begin{equation}
\label{eq:Pmatrix}
P \quad = \quad \begin{pmatrix}
p_{11} & p_{12} & \cdots & p_{1n} \\
p_{21} & p_{22} & \cdots & p_{2n} \\
 \vdots & \vdots & \ddots & \vdots \\
p_{m1} & p_{m2} & \cdots & p_{mn} \\
\end{pmatrix}.
\end{equation}
The entry $p_{ij}$ represents the probability that
the first variable is in state $i$ and the second is in state $j$.
Thus, the entries of $P$ are non-negative and their sum $p_{++}$ is $1$.
By a~statistical model, we mean a closed subset $\mathcal{M}$
of the probability simplex $\Delta_{mn-1}$ of all such matrices~$P$.

If  i.i.d.~samples are drawn from some $P$  then we summarize the data also in a matrix
\begin{equation}
\label{eq:Umatrix}
U \quad = \quad \begin{pmatrix}
u_{11} & u_{12} & \cdots & u_{1n} \\
u_{21} & u_{22} & \cdots & u_{2n} \\
 \vdots & \vdots & \ddots & \vdots \\
u_{m1} & u_{m2} & \cdots & u_{mn} \\
\end{pmatrix}.
\end{equation}
The entries of $U$ are non-negative integers whose sum is $u_{++}$.
As is customary in algebraic statistics \cite{LiAS, HKS, OpPro},
we write the {\em likelihood function} corresponding to the data
matrix $U$ as
\begin{equation}
\label{eq:loglike} \ell_U \quad = \quad
\frac{\prod_{i=1}^m \prod_{j=1}^n p_{ij}^{u_{ij}} }
{\bigl(\,\sum_{i=1}^m \sum_{j=1}^n p_{ij}\, \bigr)^{u_{++}}}.
\end{equation}
This formula defines a rational function on the
complex projective space $\PP^{mn-1}$
whose restriction to the simplex $\Delta_{mn-1}$ is
the usual likelihood function divided by a multinomial coefficient.
The MLE problem is to find the global maximum
of  $\ell_U$ over the model $\mathcal{M}$.

Our model of interest is
the set $\mathcal{M}_r$ of matrices $P$ of rank $\leq r$.
This is the intersection  of the
variety $\mathcal{V}_r \subset \mathbb{P}^{mn-1}$ defined by the $(r{+}1) \times (r{+}1)$-minors of~$P$
with $\Delta_{mn-1}$.
For generic $U$, the rational function $\ell_U$ has finitely many critical points
on the determinantal variety $\mathcal{V}_r$.
Their number is the {\em ML degree} of $\mathcal{V}_r$.
In this paper, we develop methods from numerical
algebraic geometry for computing all such critical points.
That computation enables us to
reliably find all local maxima of the likelihood function $\ell_U$
among positive points in $\mathcal{M}_r$.
Among the new results is the determination of the
bold face numbers in the following table.

\begin{theorem}
\label{thm:MLvalues}
The known values for the ML degrees of the determinantal varieties $\mathcal{V}_r$ are
\begin{equation}
\label{eq:MLvalues}
\begin{matrix}
        & (m,n) = & (3,3) & (3,4) & (3,5) & (4,4) & (4,5)  & (4,6) & (5,5) \\
r=1 & &      1 &   1   &   1 & 1 &  1 &  1     & 1 \\
r=2 & &      10 & 26 & {\bf 58} & {\bf 191} & {\bf 843} & {\bf 3119} & \, {\bf 6776} \\
r=3 & &        1         &1 &1 & {\bf 191} &  {\bf 843} &  {\bf 3119} & {\bf 61326}\, \\
r=4 & &       & & &1 &1 &1 & \, {\bf 6776} \\
r = 5 & &  & & & & & & \,\,1 \\
\end{matrix}
\end{equation}
\end{theorem}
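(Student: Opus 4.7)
The plan is to separate the table into cases by rank. The rows $r=1$ and $r = \min(m,n)$ are classical: for $r=1$ the variety $\sV_1$ is the Segre variety, which parametrizes the independence model, and the likelihood function on the simplex has a unique critical point given by the rank-one matrix of row and column sums, so the ML degree is $1$. For $r = \min(m,n)$ the variety $\sV_r$ equals all of $\PP^{mn-1}$, and the unique critical point of $\ell_U$ is $P = U/u_{++}$, again giving ML degree $1$. This justifies every non-bold entry of the table.

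For the bold entries (ranks $r=2,3,4$ with $m,n$ as indicated) I would use numerical algebraic geometry. Concretely, I would fix generic complex data $U$ and set up the polynomial system whose solutions are the critical points of $\ell_U$ on the smooth locus of $\sV_r$, e.g.\ via Lagrange multipliers against the $(r{+}1)\times(r{+}1)$-minors, or by parametrizing $P = A \cdot B$ with $A \in \C^{m \times r}$, $B \in \C^{r \times n}$ and taking the gradient of $\log \ell_U$ pulled back to the parameter space (modding out the obvious $\mathrm{GL}_r$ symmetry). I would then solve this system with homotopy continuation in \texttt{Bertini}, count the nonsingular isolated solutions lying in the open stratum $\mathcal{V}_r \setminus \mathcal{V}_{r-1}$, and report this number as the ML degree.

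Two independent certifications would then be carried out to rule out missed solutions. First, a monodromy loop: varying $U$ along a closed loop in the space of data matrices and tracking the computed critical points should permute them among themselves, and if further loops produce no new points, the solution set is complete with high confidence. Second, a trace test on a one-parameter family of $U$'s (using the linearity of trace with respect to a generic parameter on a witness set), which gives a deterministic certificate that the number of computed points equals the ML degree. Repeating the computation with several independently chosen random $U$'s guards against accidental degenerations.

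The main obstacle is the sheer size of the largest systems, notably $(m,n,r)=(5,5,3)$ with predicted ML degree $61326$: building a start system and tracking tens of thousands of paths reliably, without path-jumping or loss of precision, and then certifying completeness, is by far the most delicate step. Exploiting the $S_m \times S_n$ row/column symmetry of the likelihood equations to track orbits rather than individual solutions, and using adaptive precision tracking, are the tools I would rely on to make this feasible; the observed coincidences in the table (such as the equality of the ML degrees for ranks $2$ and $3$ when $m=n$, or for complementary ranks in the rectangular cases) provide useful internal consistency checks and motivate the duality statement announced in the abstract.
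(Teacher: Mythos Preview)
Your high-level strategy matches the paper's: dispose of $r=1$ and $r=\min(m,n)$ by the classical closed-form critical points, then attack the remaining entries with homotopy continuation in {\tt Bertini} and certify completeness via a trace test. That part is fine.

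The gap is in the formulation of the polynomial system. Your two suggestions---Lagrange multipliers against the $(r{+}1)\times(r{+}1)$ minors, or pulling back $\log\ell_U$ along a parametrization $P=A\cdot B$---are precisely the approaches that had been tried before and that the paper reports as inadequate. The minor-based Jacobian formulation (what the paper calls the \emph{rank formulation}) involves $\binom{m}{r+1}\binom{n}{r+1}$ rows, requires saturation to remove lower-rank and boundary loci, and stalled at $3\times 4$ in earlier symbolic work. A naive $P=AB$ parametrization does not by itself yield a square system with the sparsity needed for polyhedral or regeneration homotopies to succeed at the scale of $(5,5,3)$ with $61326$ solutions.

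The paper's actual contribution here is a specific square system of $mn$ equations in $mn$ unknowns, the \emph{local kernel formulation}: one writes $P$ in block form determined by an $r\times r$ block $P_1$ together with left and right kernel data $L_1,R_1$, introduces a Lagrange-multiplier matrix $\Lambda$ so that $(R\Lambda L)^T$ spans the normal space $T_P^\perp$, and encodes criticality as the single matrix equation
\[
P \star (R\cdot\Lambda\cdot L)^T + u_{++}\,P \;=\; U.
\]
This system has degree at most $4$, is sparse, and carries a natural multihomogeneous structure; those features are what bring the polyhedral root counts down to a tractable range (Table~1 in the paper) and make regeneration effective. Your monodromy idea and the $S_m\times S_n$ symmetry reduction are reasonable heuristics, but the paper does not use them; what it does use, and what your proposal is missing, is this reformulation of the critical equations.
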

The smaller numbers $10$ and $26$ had already been computed in
\cite[\S 5]{HKS}, but the symbolic computations using {\tt Singular} that were presented
in \cite{HKS} had failed beyond the size $3 \times 4$.

In 2005, the third author offered a cash prize of 100 Swiss Francs (cf.~\cite[\S 3]{OpPro})
for the solution of a particular $4 \times 4$-instance that was described in
\cite[Example 1.16]{ASCB}. That prize was won in 2008 by
Mingfu Zhu who solved this challenge in \cite{ZJG}.
See also \cite[Example 5.2]{Stei} for a solution using {\tt Singular},
and \cite{FHRZ} for a statistical perspective on this problem.
However, none of these papers had found the number $191$
of critical points for the $4 \times 4$ cases.
In the first version of this paper, we stated the conjecture
that the column symmetry among the ML degrees always holds.
This has subsequently been proven by Draisma and~Rodriguez:

\begin{theorem}[\cite{DR}]
\label{conj:duality}
If $m \leq n$ then the ML degrees for rank $r$ and for rank $m-r+1$ coincide.
\end{theorem}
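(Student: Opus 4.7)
The plan is to construct a bijection between the critical points of $\ell_U$ on $\mathcal{V}_r$ and those on $\mathcal{V}_{m-r+1}$ for generic data $U$, via an explicit involutory correspondence suggested by the Lagrangian form of the critical equations. I would first write down the Lagrange conditions: at a smooth critical point $P \in \mathcal{V}_r$ of rank exactly $r$ with all $p_{ij} \neq 0$, let $K \in \C^{(m-r) \times m}$ span the left kernel of $P$ and let $L \in \C^{n \times (n-r)}$ span the right kernel. The normal space to $\mathcal{V}_r$ at $P$ equals $\{K^T \Lambda L^T : \Lambda \in \C^{(m-r) \times (n-r)}\}$, so the critical condition reads
\[
\left(\tfrac{u_{ij}}{p_{ij}}\right)_{i,j} \;-\; \tfrac{u_{++}}{p_{++}}\, J \;=\; K^T \Lambda L^T
\]
for some matrix $\Lambda$, where $J$ is the $m \times n$ all-ones matrix.

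The key structural observation is a rank bound on the weighted reciprocal $Q_{ij} := u_{ij}/p_{ij}$: rearranging the critical equation gives $Q = (u_{++}/p_{++})\, J + K^T \Lambda L^T$, so $\mathrm{rank}(Q) \leq 1 + (m-r) = m-r+1$ (using $m \leq n$). Thus $P \mapsto Q$ sends critical points on $\mathcal{V}_r$ into $\mathcal{V}_{m-r+1}$. Since $p_{ij} q_{ij} = u_{ij}$, applying the same recipe to $Q$ recovers $P$, so the correspondence is involutory. Verifying that $Q$ is itself a critical point of $\ell_U$ on $\mathcal{V}_{m-r+1}$ would close the bijection and prove the theorem, since the same argument in reverse, using $m-(m-r+1) = r-1$, would produce a rank bound of $r$ on the image.

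The main obstacle is this last verification, and a direct check shows that the naive map $P \mapsto Q$ does not by itself preserve criticality. Already when $(m,n,r) = (2,2,1)$, the rank-one MLE $p_{ij} = u_{i+}\, u_{+j}/u_{++}^2$ yields $Q_{ij} = u_{ij}\, u_{++}^2/(u_{i+}\, u_{+j})$, which is not a scalar multiple of $U$ and hence is not the critical point of $\ell_U$ on $\mathcal{V}_2$. The correct correspondence must therefore be enriched, incorporating data from the Lagrange multiplier $\Lambda$, or replaced by a correspondence on an incidence variety that records both $P$ and the dual kernels. The technical heart of the Draisma--Rodriguez argument must be to identify such an enrichment and to verify, using an explicit computation of the left and right kernels of $Q = cJ + K^T \Lambda L^T$ (both generically equal to the intersection of $\ker K$, resp.\ $\ker L^T$, with $\mathbf{1}^\perp$), that the reverse Lagrange condition on $\mathcal{V}_{m-r+1}$ becomes automatic. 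As a backup route I would invoke Huh's description of the ML degree as the signed Euler characteristic of $\mathcal{V}_r$ minus the coordinate hyperplane arrangement together with $\{p_{++} = 0\}$, and attempt to match the two Euler characteristics via the torus action on $\PP^{mn-1}$ and a Bott-type computation in which contributions pair up under the symmetry $r \leftrightarrow m - r + 1$.
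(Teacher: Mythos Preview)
Your proposal is not a proof but a sketch with an acknowledged gap, and you correctly diagnose where it fails: the naive map $P \mapsto Q$ with $q_{ij} = u_{ij}/p_{ij}$ does not send critical points to critical points. The paper itself does not prove this theorem either; it attributes the result to Draisma and Rodriguez \cite{DR}. However, the paper does record, in Theorem~\ref{conj:Constant}, the \emph{correct} bijection, and it is close to what you wrote: one takes
\[
q_{ij} \;=\; \frac{u_{ij}\,u_{i+}\,u_{+j}}{u_{++}^3\, p_{ij}},
\qquad\text{equivalently}\qquad P \star Q \;=\; \Omega_U,
\]
where $(\Omega_U)_{ij} = u_{ij}\,u_{i+}\,u_{+j}/u_{++}^3$. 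This is your $Q$ rescaled on the left and right by diagonal matrices with entries $u_{i+}/u_{++}$ and $u_{+j}/u_{++}$, so your rank bound $\mathrm{rank}(Q) \leq m-r+1$ survives intact. The crucial fact that makes this rescaling the right one is the marginal constraint recorded in the paper's final Remark in Section~\ref{sec:Conjectures}: every critical point on $\mathcal{V}_r$ satisfies $p_{i+} = u_{i+}/u_{++}$ and $p_{+j} = u_{+j}/u_{++}$. This is exactly the ``enrichment'' you were looking for, and it does not involve the Lagrange multiplier $\Lambda$ at all. Your $(2,2,1)$ counterexample is resolved by this scaling: with $p_{ij} = u_{i+}u_{+j}/u_{++}^2$ one gets $q_{ij} = u_{ij}/u_{++}$, which is indeed the unique critical point on $\mathcal{V}_2$.

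What remains after identifying the correct map is the verification that $Q$ satisfies the Lagrange condition on $\mathcal{V}_{m-r+1}$; that is the substance of the Draisma--Rodriguez argument and is not reproduced in this paper. Your backup route via Huh's Euler-characteristic formula is problematic because $\mathcal{V}_r^0$ is singular along $\mathcal{V}_{r-1}^0$ for $r \geq 2$, so Huh's smooth result does not apply directly, as the paper already cautions in the introduction.
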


Our findings might appeal also to those
interested in the topology of algebraic varieties.
For a variety $\mathcal{V}$ in $\PP^{mn-1}$,
let $\mathcal{V}^0$ denote the open subset  given by
 $\,p_{11} p_{12} \cdots p_{mn} p_{++} \not= 0$.
Huh \cite{Huh} recently proved that if $\mathcal{V}^0$ is smooth then
the ML degree of $\mathcal{V}$ is equal to the signed
  Euler characteristic of $\mathcal{V}^0$.
In our case, for $r \geq 2$, the open determinantal variety $\mathcal{V}^0_r$ is singular along
$\mathcal{V}^0_{r-1}$, but a suitably modified statement is expected to be true. It might be speculated that
the results in Theorems \ref{thm:MLvalues} and \ref{conj:duality}
will ultimately have a topological explanation.

The entries ``$1$'' of the table in (\ref{eq:MLvalues}) have easy explanations.
For $r=m$ we have
$\mathcal{V}_m = \PP^{mn-1}$ and the unique critical point of
the likelihood function $\ell_U$ is $P = \frac{1}{u_{++}} U$.
The first row of  (\ref{eq:MLvalues}) states that
the independence model $\mathcal{M}_1$ has ML degree $1$.
This fact is well-known to statisticians, as the rank $1$ matrix with entries
$ (u_{i+} u_{+j})/u_{++}^2$ is the unique critical point for $\ell_U$ on $\mathcal{V}^0_1$.
We found it instructive to derive this fact from Huh's~result~\cite[Theorem 1.(iii)]{Huh}:

\begin{example} \rm
Let $r=1$.
 The Segre variety $\mathcal{V}_1 = \PP^{m-1} \times \PP^{n-1}$ is smooth.
 Fix coordinates $(x_1: \cdots:x_m)$
 on $\PP^{m-1}$  and coordinates $(y_1: \cdots : y_n)$ on $\PP^{n-1}$.
 The open subset  $\mathcal{V}^0_1 $ consists of all
points in $\PP^{m-1} \times \PP^{n-1}$ with
$ x_1 x_2 \cdots x_m y_1 y_2 \cdots y_n (x_1{+}\cdots {+}x_m) (y_1 {+} \cdots {+} y_n) \not= 0$.
Hence
$$
\mathcal{V}^0_1 \quad = \quad
\hbox{($\PP^{m-1}$ minus $m+1$ hyperplanes) } \,\times \,
\hbox{($\PP^{n-1}$ minus $n+1$ hyperplanes)}.
$$
Each factor has signed Euler characteristic $1$, and hence so does their product.
\qed
\end{example}

This article is organized as follows.
In Section~\ref{sec:Equations}, we formulate the constraints
that characterize critical points of $\ell_U$ on $\mathcal{V}_r$
as a square system of polynomial equations. The specific formulation
in Theorem \ref{prop:LocalKernel} is one of our key contributions.
It is used to derive upper bounds in terms of
$m$, $n$, and $r$.
Theorem \ref{thm:MLSymmValues}
extends our results to the case of symmetric matrices,
and hence to mixtures of two identically distributed random variables.

Section~\ref{sec:NAG} is devoted to our computations using numerical
algebraic geometry.  This furnishes valuable new tools for practitioners of statistics who
are interested in exploring probability one
algorithms for computing the global~maximum of a given likelihood function.

In Section~\ref{sec:Conjectures}, we introduce a refined version of
Theorem \ref{conj:duality}, now
also proved in \cite{DR}, and we summarize the computational
evidence we had gathered to support it.
The Galois group computations in Proposition \ref{prop:galois}
might be of independent interest.
In Theorem \ref{thm:DiaNA}, we present a proof of
\cite[Conjecture~11]{ZJG} by means of certified numerical computations.

Section~\ref{sec:Stats} features the statistical view
on our approach, and we explain how it differs from running
the EM algorithm for discrete mixture models.
 The determinantal variety $\mathcal{V}_r$ is the Zariski closure of
the latent variable model for $r$-fold mixtures of independent variables.
They are equal in $\Delta_{mn-1}$
if and only if $r \leq 2$.
For $r \geq 3$ this takes us to
the real algebraic geometry problem, pioneered in \cite{MSS}, of distinguishing between
rank and non-negative~rank.

\section{Equations and bounds}\label{sec:Equations}

In this section, we present several formulations of the critical equations
for the likelihood function on the determinantal variety $\mathcal{V}_r = \{{\rm rank}(P) \leq r\}$.
We view $\mathcal{V}_r$
as an affine variety in the space of matrices $\C^{m \times n}$ and we
assume $m \leq n$.
Our main result is Theorem \ref{prop:LocalKernel}
which expresses our problem as a square system
of $mn$ polynomial equations in $mn$ unknowns.

An $m \times n$-matrix $P$ is a regular point in
the determinantal variety $\mathcal{V}_r$ if and only if
${\rm rank}(P)= r$. If this holds then the tangent
space $T_P$ is a linear subspace of dimension
$rn+rm-r^2$ in $\C^{m \times n}$, and its orthogonal complement
(with respect to the standard inner product)
 is a linear subspace $T_P^\perp$ of dimension  $(m-r)(n-r)$ in $\C^{m \times n}$.

 Our input is a strictly positive data matrix~$U$.
 We consider the logarithm of the likelihood function $\ell_U$ as in (\ref{eq:loglike}).
 The partial derivatives of the {\em log-likelihood function} $ {\rm log}(\ell_U)$ are then
\begin{equation}
\label{eq:logder1}
\frac{\partial {\rm log}(\ell_U)}{ \partial p_{ij} } \,\, = \,\, \frac{u_{ij} }{p_{ij}} - \frac{u_{++}}{p_{++}}  .
\end{equation}
By \cite[Proposition 3]{HKS}, a matrix $P$ of rank $r$ is a critical point for
 ${\rm log}(\ell_U)$ on $\mathcal{V}_r$ if and only if
the linear  subspace  $T_P^\perp$ contains
the $m \times n$-matrix whose $(i,j)$ entry is (\ref{eq:logder1}).
Hence the system of equations we seek to solve can be expressed in the following
{\em geometric formulation}:
\begin{equation}
\label{eq:formulation1}
{\rm rank}(P) = r \,,\quad \hbox{} \quad p_{++} = 1\,, \quad \hbox{and \
the matrix $\bigl( u_{ij}/p_{ij} - u_{++} \bigr) $ lies in $T_P^\perp$} .
\end{equation}
This is saying that the gradient of the objective function must
be orthogonal to the tangent space of the variety at a critical point
as in the elementary Lagrange multipliers method.
When translating (\ref{eq:formulation1}) into polynomial equations,
we need to make sure to exclude matrices $P$ of
rank strictly less than $r$, as these are singular points in $\mathcal{V}_r$.
We also need to exclude matrices $P$ with $p_{ij} = 0$ for some $(i,j)$.
These non-degeneracy conditions require some care.

In \cite{HKS}, the following formulation was used to represent our problem.
Let $J(P)$ denote the Jacobian matrix of the prime ideal defining $\mathcal{V}_r$. Since
that ideal is minimally generated by the $\binom{m}{r+1} \binom{n}{r+1}$
subdeterminants of format $(r+1) \times (r+1)$, the Jacobian $J(P)$
is a matrix of format $\,\binom{m}{r+1} \binom{n}{r+1}\times mn$ whose
entries are homogeneous polynomials of degree~$r$.
Let $[U]$ denote the matrix $U$ when written as a row vector of format
$1 \times mn$, and similarly $[P]$ is the vectorization of $P$.
We write ${\rm diag}[P]$ for the diagonal
$mn \times mn$-matrix with entries $p_{11}, p_{12} , \ldots, p_{mn}$.
The following extended Jacobian has
$2 + \binom{m}{r+1} \binom{n}{r+1}$ rows and $mn$ columns:
$$
\mathcal{J}(P) \quad = \quad
\begin{pmatrix}
[U] \\
[P] \\
J(P) \cdot {\rm diag}[P] \\
\end{pmatrix}.
$$
For a matrix $P$ of rank $r$, the
Jacobian $J(P)$ has rank $(m-r)(n-r) = {\rm codim}(\mathcal{V}_r)$.
The third condition in (\ref{eq:formulation1}) translates into the requirement
that the span of the first two rows intersects the rowspace of
$J(P) \cdot {\rm diag}[P] $. From this we derive the
{\em rank formulation}
\begin{equation}
\label{eq:formulation2}
 {\rm rank}(P) \leq r \quad \hbox{and} \quad
{\rm rank}( \mathcal{J}(P) ) \leq (m-r)(n-r) + 1 .
\end{equation}
This formulation of our problem is elegant and is adapted to
projective geometry in $\PP^{mn-1}$. In terms of equations, we simply take
the minors of size $r+1$ of the matrix $P$, and the
minors of size $(m-r)(n-r)+2$ of the matrix $\mathcal{J}(P)$.
However, this has two serious disadvantages: first, the number of minors
is enormous, and second, we must get rid of extraneous solutions
by saturation. Namely, to get rid of solutions $P$ with
${\rm rank}(P) \leq r-1$, we need to saturate by the
$r \times r$-minors of $P$, and to get rid of solutions
on the boundary, we need to saturate by the product of linear forms
$\,p_{11} p_{12} \cdots p_{mn} p_{++}$.
This was done symbolically in \cite[\S 4]{HKS}.

The calculation can be sped up a little bit by taking only $(m-r)(n-r)$ of the rows of $J(P)$,
while also imposing the non-homogeneous equation $p_{++}=1$.
Finally, we can replace the first two rows of $J(P)$ by a single row
$[U] - u_{++} [P]$ and require that the maximal minors
of the resulting $((m-r)(n-r)+1) \times mn$-matrix be zero.
This leads to some improvements but is still far from sufficient
to get to the full range of ML degrees reported in  Theorem \ref{thm:MLvalues}.

To get to those results, we pursue the following alternatives:
first, we introduce new unknowns which allow us to
replace the rank conditions by bilinear equations,
and, second, we  represent the subspace $\,T_P^\perp = {\rm rowspace}(J(P))\,$
using those same new unknowns.
Let $L $ be an $(m-r) \times m$-matrix of unknowns,
let $R $ be an $n \times (n-r)$-matrix of unknowns,
and $\Lambda = (\lambda_{ij})$ an $(n-r)\times (m-r)$-matrix of unknowns.
Then our {\em general kernel formulation}~is:
\begin{equation}
\label{eq:formulation3}
p_{++} = 1, \quad
L \cdot P = 0, \quad
P \cdot R = 0, \quad \hbox{and} \quad
 P \star (R \cdot \Lambda \cdot L)^T + u_{++} \cdot P = U .
\end{equation}
 Here $A \star B $ denotes the Hadamard (entry-wise) product of two matrices
of the same format.
If the rows of $L$ are linearly independent and the columns of $R$ are linearly
independent, then either of the conditions $L \cdot P = 0$ and $P \cdot R = 0$
suffice to imply that ${\rm rank}(P) \leq r$.

We now explain the last condition in (\ref{eq:formulation3}).
The space $T_P^\perp$ is spanned by the rank $1$ matrices
$(\rho_i\cdot\ell_j)^T$ where $\rho_i$ is the $i$-th column of $R$
and $\ell_j$ is the $j$-th row of $L$.
Then
$$
(R \cdot \Lambda \cdot L)^T \,\, = \,\,
\sum_{i=1}^{n-r} \sum_{j=1}^{m-r} \lambda_{ij}(\rho_i\cdot\ell_j)^T$$
is a general matrix in $T_P^\perp$.
 The matrix  $\bigl( u_{ij}/p_{ij} - u_{++} \bigr) $ in
 (\ref{eq:formulation1}) can be written as
\begin{equation}
\label{eq:logder2}
 P^{\star(-1)} \star U - u_{++}  \cdot {\bf 1} .
 \end{equation}
 Hence the last condition of (\ref{eq:formulation1}) is equivalent to
 saying (\ref{eq:logder2}) equals $(R \cdot \Lambda \cdot L)^T$ for some~$\Lambda$. We write this as
 $\,(R \cdot \Lambda \cdot L)^T  + u_{++}\cdot {\bf 1} =   P^{\star(-1)} \star U $. We take
Hadamard product of both sides with the matrix $P$ to get the last equation in (\ref{eq:formulation3}).
This operation is invertible since all entries~of~$U$ are non-zero.
Indeed, that last equation is $\, P \star \bigl( (R \cdot \Lambda \cdot L)^T + u_{++} \cdot {\bf 1} \bigr) = U$,
and if this holds then all $mn$ entries of the matrix $P$ must be~non-zero.

We conclude that (\ref{eq:formulation3}) is a correct formulation
of our problem provided we can ensure
$$ {\rm rank}(L) = m-r,\quad
{\rm rank}(R) = n-r,\quad \hbox{and}\quad
 {\rm rank}(P) = r. $$
We note that (\ref{eq:formulation3}) is highly
redundant as far as the number of variables is concerned.
There are several ways to reduce that number. For instance,
we can simply set $\lambda_{ij} = 1$ for all $i,j$.
 In addition, we can  either replace $L$ by a single row or replace $R$
by a single column. Even after these simplifications, the critical points
of $\ell_U$ on $\mathcal{V}_r$ are still represented faithfully.

After some experimentation, we found that the following simplification steps
lead to the best computational results.  Recall that $m \leq n$.
Let $P_1$ be an $r \times r$-matrix of unknowns,
let $R_1$ be an $r \times (n-r)$-matrix of unknowns, and
let $L_1$ be an $(m-r) \times r$-matrix of unknowns. The matrix
$\Lambda = (\lambda_{ij})$ is as before.
Using this notation, we take (\ref{eq:formulation3}) with
\begin{equation}\label{eq:formulation4}
L = \begin{pmatrix} L_1 &\! -I_{m-r}\end{pmatrix},\quad
P = \begin{pmatrix} P_1  &\! P_1 R_1 \\ L_1 P_1 &\! L_1 P_1 R_1 \end{pmatrix},\quad \hbox{and} \quad
R = \begin{pmatrix} R_1 \\ - I_{n-r} \end{pmatrix},
\end{equation}
where $I_{m-r}$ and $I_{n-r}$ are identity matrices.
We call (\ref{eq:formulation3}) with (\ref{eq:formulation4}) the
{\em local kernel formulation} of our problem.
Note that the constraints $L\cdot P = 0$, $P \cdot R = 0$,
${\rm rank}(L) = m-r$, and ${\rm rank}(R) = n-r$ are automatically
satisfied in this formulation.
The condition ${\rm rank}(P) = r$ is also implied for every
solution provided $U$ is generic.
Finally, the equation $p_{++} = 1$ can be removed from (\ref{eq:formulation3})
in this formulation since $p_{++}=1$ is equivalent to the sum of
all $mn$ equations given by $ P \star (R \cdot \Lambda \cdot L)^T + u_{++} \cdot P = U$.
By counting equations and unknowns, we now see that our system is a square system
consisting of $mn$ equations in $mn$ unknowns.

\begin{theorem}\label{prop:LocalKernel}
Let $U$ be a generic $m \times n$ data matrix with $m\leq n$.  The polynomial system
\begin{equation}
\label{eq:formulation5}
 P \star (R \cdot \Lambda \cdot L)^T + u_{++} \cdot P \,\,=\,\, U
\end{equation}
consists of $mn$ equations in $mn$ unknowns given by (\ref{eq:formulation4}).
It has finitely many complex solutions $(P_1,L_1,R_1,\Lambda)$, and the corresponding
$m \times n$-matrices
$P$ defined by (\ref{eq:formulation4}) are precisely the critical points of the
likelihood function $\ell_U$ on the determinantal variety $\mathcal{V}_r$.
\end{theorem}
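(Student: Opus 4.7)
The strategy is to view the local kernel formulation as a specialization of the general kernel formulation (\ref{eq:formulation3}), whose equivalence with the critical equations (\ref{eq:formulation1}) was established in the surrounding text, and then to verify that the specialization captures every critical point exactly once. The first step is to check the tautological identities forced by the block structure (\ref{eq:formulation4}). Writing $P = \bigl(\begin{smallmatrix} I_r \\ L_1 \end{smallmatrix}\bigr)\, P_1 \,\bigl(\,I_r\;\; R_1\,\bigr)$, one reads off $L \cdot P = 0$ and $P \cdot R = 0$ as polynomial identities in the entries of $P_1, L_1, R_1$; the $-I_{m-r}$ and $-I_{n-r}$ blocks in $L$ and $R$ force $\mathrm{rank}(L) = m-r$ and $\mathrm{rank}(R) = n-r$; and the factorization itself gives $\mathrm{rank}(P) \leq r$.

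Next I would recover the missing normalization $p_{++} = 1$ from (\ref{eq:formulation5}). Setting $M = R \cdot \Lambda \cdot L$ and summing all $mn$ scalar equations, the left-hand side contributes $\mathrm{tr}(P \cdot M) + u_{++}\, p_{++}$, and $\mathrm{tr}(PM) = \mathrm{tr}\bigl((PR) \Lambda L\bigr) = 0$ because $P R = 0$. Since $u_{++} \neq 0$, this forces $p_{++} = 1$. With this in hand, (\ref{eq:formulation5}) read entrywise says that every $p_{ij}$ is nonzero (as $u_{ij} \neq 0$ for generic $U$) and that $u_{ij}/p_{ij} - u_{++}$ equals the $(i,j)$ entry of $(R \Lambda L)^T$. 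As $\Lambda$ varies, $(R\Lambda L)^T$ ranges over all linear combinations of the rank-one matrices $(\rho_i \ell_j)^T$, which span $T_P^\perp$. Hence every solution of (\ref{eq:formulation5})–(\ref{eq:formulation4}) for which $\mathrm{rank}(P) = r$ produces a critical point of $\ell_U$ on $\mathcal{V}_r$ via the geometric formulation (\ref{eq:formulation1}).

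For the converse, given a critical point $P$ of $\ell_U$ on $\mathcal{V}_r$, I would argue that for generic $U$ it has rank exactly $r$ (critical points in the singular locus $\mathcal{V}_{r-1}$ form a subvariety of strictly lower dimension in the incidence correspondence with $U$) and that, after possibly permuting rows and columns, its top-left $r \times r$ block $P_1$ is invertible (the vanishing of $\det P_1$ is a proper closed condition on $\mathcal{V}_r$, so generic $U$ produces no critical point on it). On this principal affine chart, the factorization $P = \bigl(\begin{smallmatrix} I_r \\ L_1 \end{smallmatrix}\bigr)\, P_1 \,\bigl(\,I_r\;\; R_1\,\bigr)$ is unique, determining $L_1$ and $R_1$ from $P$; then $\Lambda$ is uniquely determined by the linear independence of the $(\rho_i \ell_j)^T$. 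This gives a bijection between solutions of (\ref{eq:formulation5}) and critical points of $\ell_U$ on $\mathcal{V}_r$. An equation-unknown count $r^2 + r(m-r) + r(n-r) + (m-r)(n-r) = mn$ confirms that the system is square.

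Finiteness then follows either directly from the Bertini-type genericity for square polynomial systems or, more cleanly, from the bijection above together with the finiteness of the ML degree of $\mathcal{V}_r$ for generic $U$ (the critical locus of $\ell_U$ on $\mathcal{V}_r^0$ is zero-dimensional, e.g.\ by the rank formulation (\ref{eq:formulation2})). The step I expect to be the main obstacle is the generic-position argument of the previous paragraph: showing that no critical point of $\ell_U$ falls into the bad locus where every $r \times r$ submatrix needed for a chart is singular. This is what licenses the use of the single affine patch (\ref{eq:formulation4}) instead of a cover by the $\binom{m}{r}\binom{n}{r}$ analogous charts, and it is the place where one genuinely exploits genericity of $U$ rather than mere positivity.
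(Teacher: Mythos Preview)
Your proposal is correct and follows essentially the same approach as the paper, whose proof is the discussion immediately preceding the theorem statement: the block form (\ref{eq:formulation4}) forces $L\cdot P=0$, $P\cdot R=0$, and the rank conditions on $L,R$ automatically; summing the $mn$ equations recovers $p_{++}=1$ (the paper phrases this via vanishing column sums of $P\star(R\Lambda L)^T$, you via $\mathrm{tr}(PM)=\mathrm{tr}((PR)\Lambda L)=0$, which is equivalent); and genericity of $U$ guarantees $\mathrm{rank}(P)=r$. You are more explicit than the paper about the converse bijection and about why genericity of $U$ lets a single affine chart capture every critical point---the paper simply asserts these points, so your identification of the $\det P_1\neq 0$ step as the main subtlety is apt.
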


Since the column sums of $P \star (R \cdot \Lambda \cdot L)^T$ are zero, we can further simplify $n$ equations.
For the first $m$ columns, we replace each entry on the diagonal with the column sum.
For the last $n-m$ columns, we replace the last entry in the column
with the column sum.

\begin{example}\label{ex:33} \rm
To illustrate the local kernel formulation (\ref{eq:formulation5}),
we consider $m=n=3$ with the two subcases $r=1$ and $r=2$.
Both have nine equations in nine unknowns.

\smallskip

\noindent {\em Subcase $r = 1$}:
The nine unknowns are the entries in the matrices
$$
L_1 = \begin{pmatrix} l_{11} \\ l_{21} \end{pmatrix}, \quad
P_1 =  \begin{pmatrix} p_{11} \end{pmatrix} , \quad
R_1 = \begin{pmatrix} r_{11} & r_{12}  \end{pmatrix}, \quad
\Lambda = \begin{pmatrix}
\lambda_{11} & \lambda_{12} \\
\lambda_{21} & \lambda_{22} \end{pmatrix},
$$
and the nine equations from (\ref{eq:formulation5}) take the form
$$
\begin{array}{rcl}
p_{11} (1 + l_{11} + l_{21}) &=& (u_{11} + u_{21} + u_{31})/u_{++} \\
p_{11} r_{11} (u_{++} - l_{11} \lambda_{11} - l_{21} \lambda_{12}) &=& u_{12} \\
p_{11} r_{12} (u_{++} - l_{11} \lambda_{21} - l_{21} \lambda_{22}) &=& u_{13} \\
p_{11} l_{11} (u_{++} - r_{11} \lambda_{11} - r_{12} \lambda_{21}) &=& u_{21} \\
p_{11} r_{11} (1 + l_{11} + l_{21}) &=& (u_{12} + u_{22} + u_{32})/u_{++} \\
p_{11} l_{11} r_{12} (\lambda_{21} + u_{++}) &=& u_{23} \\
p_{11} l_{21} (u_{++} - r_{11} \lambda_{12} + r_{12} \lambda_{22}) &=& u_{31} \\
p_{11} l_{21} r_{11} (\lambda_{12} + u_{++}) &=& u_{32} \\
p_{11} r_{12} (1 + l_{11} + l_{21}) &=& (u_{13} + u_{23} + u_{33})/u_{++}.
\end{array}~~~~~$$
This system has a unique solution which writes the
unknowns as rational functions in the~$u_{ij}$.

\bigskip

\noindent {\em Subcase $r = 2$}:
The nine unknowns are the entries in the matrices
$$
L_1  = \begin{pmatrix} l_{11}  & l_{12} \end{pmatrix},\quad
P_1 = \begin{pmatrix}
p_{11} & p_{12} \\
p_{21} & p_{22} \end{pmatrix},\quad
R_1 = \begin{pmatrix}
r_{11} \\ r_{21}
\end{pmatrix}, \quad
\Lambda = \begin{pmatrix} \lambda_{11} \end{pmatrix},
$$
and the nine equations take the form
$$
\begin{array}{rcl}
p_{11}(1+l_{11}) + p_{21}(1+l_{12}) &=& (u_{11} + u_{21} + u_{31})/u_{++} \\
p_{12} (l_{11} r_{21} \lambda_{11} + u_{++}) &=& u_{12} \\
(p_{11} r_{11} + p_{12} r_{21})(u_{++} - l_{11} \lambda_{11}) &=& u_{13} \\
p_{21} (l_{12} r_{11} \lambda_{11} + u_{++}) &=& u_{21} \\
p_{12}(1+l_{11}) + p_{22}(1+l_{12}) &=& (u_{12} + u_{22} + u_{32})/u_{++} \\
(p_{21} r_{11} + p_{22} r_{21}) (u_{++} - l_{12} \lambda_{11}) &=& u_{23} \\
(p_{11} l_{11} + p_{21} l_{12}) (u_{++} - r_{11}\lambda_{11}) &=& u_{31} \\
(p_{12} l_{11} + p_{22} l_{12}) (u_{++} - r_{21} \lambda_{11}) &=& u_{32} \\
(p_{11} r_{11} + p_{12} r_{21}) (1+l_{11}) + (p_{21} r_{11} + p_{22} r_{21}) (1 + l_{12}) &=& (u_{13} + u_{23} + u_{33})/u_{++}.
\end{array}~~~~~~~~$$
This system has ten complex solutions for a generic data matrix~$U$.
In  other words, the $9$ unknowns
$l_{\cdot \cdot}, p_{\cdot \cdot},  r_{\cdot \cdot}$ and
$\lambda_{11}$ are
algebraic functions of degree $10$ in  $u_{11},u_{12},\ldots,u_{33}$.
\qed
\end{example}

Upper bounds on the ML degree of $\mathcal{V}$ arise from our formulation.  The B\'ezout bound is
$$ 2^{r}\cdot 3^{n-r}\cdot 4^{n(m-1)}.$$
If we consider $(P_1,L_1,R_1,\Lambda)$ in the product space $\C^{r^2}\times\C^{r(m-r)}\times\C^{r(n-r)}\times\C^{(n-r)(m-r)}$,
our system consists of $r$ equations of degree $(1,1,0,0)$,
$n{-}r$ equations of degree $(1,1,1,0)$, and $n(m{-}1)$ equations of degree $(1,1,1,1)$.
The associated $4$-homogeneous B\'ezout bound is
the coefficient of the monomial $w^{r^2}\cdot x^{r(m-r)}\cdot y^{r(n-r)}\cdot z^{(n-r)(m-r)}$~in the expression
$$(w+x)^{r} \cdot (w+x+y)^{n-r} \cdot (w+x+y+z)^{n(m-1)}.$$

A refinement of the $4$-homogeneous bound using the fact that
each polynomial only depends upon a subset of the variables
yields a {\em linear product bound} \cite{LinearProduct}.
Finally, the {\em polyhedral root count} exploits the sparsity of the monomials
in our system.  We computed
the polyhedral bound for various cases using {\tt MixedVol} \cite{MixedVol} in {\tt PHC}~\cite{PHC}.
All of the aforementioned bounds are presented
in Table \ref{tab:Bounds} for selected values of $m$, $n$, and~$r$.
When solving a polynomial system using homotopies built from these bounds,
one must balance the added computational cost required for the tighter bound with the
computational savings arising from that bound.

\begin{table}
  \centering
{\scriptsize
  \begin{tabular}{ccccccc}
  $(m,n,r)$  & $(3,3,1)$ & $(3,3,2)$ & $(3,4,1)$ & $(3,4,2)$ & $(3,5,1)$ & $(3,5,2)$\\
  \hline
  B\'ezout       & 73728 & 49152 & 3538944 & 2359296 & 169869312 & 113246208\\
  $4$-hom        & 270   & 1350  & 840     & 29400   & 2025      & 378000 \\
  linear product & 172   & 1018  & 374     & 20844   & 650       & 68586 \\
  polyhedral     & 6     & 53    & 10      & 472     & 15        & 2724 \\
  ML Degree      & 1     & 10    & 1       & 26      & 1         & 58 \\
  \hline \\
  $(m,n,r)$ & $(4,4,1)$ & $(4,4,2)$ & $(4,4,3)$ & $(4,5,1)$ & $(4,5,2)$ & $(4,5,3)$\\
  \hline
  B\'ezout       & 905969664 & 603979776 & 402653184 & 173946175488 & 115964116992 & 77309411328 \\
  $4$-hom        & 17600     & 7276500   & 580800    & 63700        & 323723400    & 115615500 \\
  linear product & 5690      & 4791168   & 224598    & 13560        & 165869606    & 58335270 \\
  polyhedral     & 20        & 15280     & 2847      & 35           & 241218       & 145273 \\
  ML Degree      & 1         & 191       & 191       & 1            & 843          & 843 \\
  \hline
  \end{tabular}
}
\caption{Comparison of upper bounds for selected $(m,n,r)$}
\label{tab:Bounds}
\end{table}

\medskip

We close this section by discussing rank constraints on symmetric matrices of the form
\begin{equation}
\label{eq:Pmatrixsym}
P \quad = \quad \begin{pmatrix}
2p_{11} & p_{12} & p_{13} & \cdots & p_{1n} \\
p_{12} & 2 p_{22} & p_{23} & \cdots & p_{2n} \\
p_{13} & p_{23} & 2 p_{33} & \cdots & p_{3n} \\
 \vdots & \vdots & \vdots & \ddots & \vdots \\
p_{1n} & p_{2n} & p_{3n} & \cdots & 2 p_{nn} \\
\end{pmatrix}.
\end{equation}
The case $n = 3$ was treated in \cite[Example 12]{HKS}
where its ML degree was found to be $6$.
It is essential that the unknowns $p_{ii}$
on the diagonal are multiplied by $2$
before imposing the rank constraints.
The matrices (\ref{eq:Pmatrixsym}) of rank one form
 a Veronese variety in $\PP^{(n+2)(n-1)/2}$.
This variety has ML degree $1$ and
represents the independence model for two
identically distributed random variables on $n$ states.
The case $n = 2$ is the Hardy-Weinberg curve
\cite[Figure 3.1]{ASCB}.
Larger ranks $r$ correspond to the secant varieties of this Veronese variety.

\begin{theorem}
\label{thm:MLSymmValues}
The known values for the ML degrees of rank $r$ symmetric matrices (\ref{eq:Pmatrixsym})~are
\begin{equation}
\label{eq:MLvaluessym}
\begin{matrix}
        & n = & 3 & 4 & 5 & 6 \\
r=1 & &         1 & 1 & 1 & 1  \\
r=2 & &         6 & {\bf 37} & {\bf 270} & {\bf 2341}\\
r=3 & &         1 & {\bf 37} & {\bf 1394}& \\
r=4 & &           &     1     & {\bf 270} & \\
r=5 & &           &          &    1       & {\bf 2341}\\
\end{matrix}
\end{equation}
\end{theorem}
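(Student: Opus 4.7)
My plan is to mirror the proof of Theorem \ref{thm:MLvalues}: dispatch the ML-degree-$1$ entries of (\ref{eq:MLvaluessym}) by elementary arguments, build a symmetric analog of the local kernel formulation of Theorem \ref{prop:LocalKernel}, and then certify the boldface numbers with the numerical algebraic geometry tools of Section~\ref{sec:NAG}.

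For the entries equal to $1$, the case $r=n$ is trivial since the variety is all of $\PP^{(n+2)(n-1)/2}$ and the unique critical point is $P=U/u_{++}$. For $r=1$, the Veronese variety $\mathcal{V}_1$ is the image of $\PP^{n-1}\to\PP^{(n+2)(n-1)/2}$, $(x_1{:}\cdots{:}x_n)\mapsto xx^T$. Pulling back the coordinate hyperplanes and the hyperplane $\{p_{++}=0\}$ gives exactly the $n+1$ hyperplanes $x_1=0,\ldots,x_n=0$ and $x_1+\cdots+x_n=0$ in $\PP^{n-1}$. Hence $\mathcal{V}_1^0$ is isomorphic to $\PP^{n-1}$ minus $n+1$ hyperplanes, which has signed Euler characteristic $1$ by the same computation as in the Example for $r=1$, and Huh's theorem \cite[Theorem 1.(iii)]{Huh} delivers ML degree $1$. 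The entry $(n,r)=(3,2)$ giving $6$ is already \cite[Example 12]{HKS}.

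For the boldface entries, I would set up the symmetric analog of (\ref{eq:formulation4})--(\ref{eq:formulation5}). Parametrize a symmetric $n\times n$ matrix of rank $r$ by
\begin{equation*}
P \,=\, \begin{pmatrix} P_1 & P_1 R_1 \\ R_1^T P_1 & R_1^T P_1 R_1 \end{pmatrix},
\end{equation*}
with $P_1$ symmetric of size $r\times r$ and $R_1$ of size $r\times (n-r)$. With $R=\bigl(\begin{smallmatrix} R_1 \\ -I_{n-r} \end{smallmatrix}\bigr)$, the orthogonal complement of $T_P$ inside the space of symmetric matrices is spanned by the symmetric rank-one pieces built from columns of $R$, so a general element is $R\,\Lambda\, R^T$ with $\Lambda$ a symmetric $(n-r)\times(n-r)$ block of unknowns. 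Adapting (\ref{eq:logder2}) to the symmetric entries of (\ref{eq:Pmatrixsym}) and taking a Hadamard product with $P$ as in Section~\ref{sec:Equations} yields
\begin{equation*}
P\star (R\Lambda R^T)\,+\,u_{++}\cdot P\,=\,U.
\end{equation*}
This is a square system: an easy Vandermonde identity shows that $\binom{r+1}{2}+r(n-r)+\binom{n-r+1}{2}=\binom{n+1}{2}$, so the number of unknowns in $(P_1,R_1,\Lambda)$ equals the number of independent symmetric entries in $U$.

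Finally, I would run the numerical methods of Section~\ref{sec:NAG}---either a polyhedral homotopy or a regeneration/monodromy loop---on this square system, then certify the resulting count via the trace test and $\alpha$-theoretic verification of each endpoint. The column-wise symmetries visible in (\ref{eq:MLvaluessym}) ($37=37$ for $n=4$, $270=270$ for $n=5$, and $2341=2341$ for $n=6$) provide a strong independent consistency check and are presumably explained by a symmetric analog of the duality Theorem~\ref{conj:duality}. The main obstacle is purely computational: the cases $(n,r)=(5,3)$ with $1394$ critical points and the $n=6$ row with thousands of solutions stress path-tracking technology, and one must guard against path loss and path crossing near the singular stratum $\mathcal{V}_{r-1}$, where extraneous start-system solutions accumulate.
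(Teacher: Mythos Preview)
Your approach is essentially the paper's own: dispatch the $r=1$ and $r=n$ entries by the same arguments, set up a symmetric local kernel formulation, and certify the boldface entries with the numerical tools of Section~\ref{sec:NAG}. The one technical slip is in your square system. Because the diagonal entries of the matrix (\ref{eq:Pmatrixsym}) are $2p_{ii}$ while the likelihood (\ref{eq:logsymmlike}) is written in the variables $p_{ij}$, the gradient condition does not translate directly into $P\star(R\Lambda R^T)+u_{++}P=U$; the right-hand side must be $D\star U$, where $D$ is the symmetric $n\times n$ matrix with $2$'s on the diagonal and $1$'s off it. With that correction (and the relabeling $R_1=L_1^T$), your formulation coincides with the paper's (\ref{eq:SymmFormulation})--(\ref{eq:SymmFormulation1}).
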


\smallskip

Our input is a strictly positive symmetric $n\times n$-matrix $U$.
The likelihood function equals
\begin{equation}
\label{eq:logsymmlike} \ell_U \quad = \quad
\frac{\prod_{i\leq j} p_{ij}^{u_{ij}} }
{\bigl(\,\sum_{i\leq j} p_{ij}\,\bigr)^{\sum_{i\leq j} u_{ij}}}.
\end{equation}
In the statistical context, when the sum of the $p_{ij}$ entries equals $1$, we have
\begin{equation}
\label{eq:logsymmder1}
\frac{\partial {\rm log}(\ell_U)}{ \partial p_{ij} } \,\, = \,\, \frac{u_{ij}}{p_{ij}} - \sum_{i\leq j} u_{ij} .
\end{equation}
We compute the critical points on the variety of rank $r$ matrices (\ref{eq:Pmatrixsym})
by adapting the formulation in Theorem \ref{prop:LocalKernel}.
Let $P_1$ be a symmetric $r\times r$-matrix of unknowns
where the diagonal entries are multiplied by $2$ similar to (\ref{eq:Pmatrixsym}),
let $L_1$ be an $(n-r)\times r$-matrix of unknowns,
and $\Lambda$ be a symmetric $(n-r)\times(n-r)$-matrix.
Following (\ref{eq:formulation4}), we define
\begin{equation}\label{eq:SymmFormulation}
L = \begin{pmatrix} L_1 &\! -I_{m-r}\end{pmatrix} \quad \hbox{and} \quad
P = \begin{pmatrix} P_1  &\! P_1 L_1^T \\ L_1 P_1 &\! L_1 P_1 L_1^T \end{pmatrix}.
\end{equation}

To account for the $p_{ii}$'s not being multiplied by $2$ in the likelihood function,
let $D$ be the $n\times n$-matrix whose diagonal entries are $2$ and off-diagonal entries are $1$.
The {\em symmetric local kernel formulation} is the square system consisting of the upper triangular part of
\begin{equation}\label{eq:SymmFormulation1}
P\star (L^T\cdot\Lambda\cdot L) + \sum_{i\leq j} u_{ij} \cdot P \,\, = \,\, D\star U.
\end{equation}
This is a system of $n(n+1)/2$ equations in $n(n+1)/2$ unknowns.  Similar to the local kernel formulation,
the column sums of $P\star(L^T\cdot\Lambda\cdot L)$ are zero. Hence
(\ref{eq:SymmFormulation1}) implies $\sum_{i \leq j} p_{ij} = 1$.  We use
this fact to replace the diagonal entries in (\ref{eq:SymmFormulation1}) with the corresponding column sum.

\begin{example}\label{ex:33symm} \rm
We illustrate the symmetric local kernel formulation (\ref{eq:SymmFormulation1})
for the two subcases $r=1,2$ when $n=3$.
Both have $6$ equations in $6$ unknowns.  Here, $u_{++} = \sum_{i\leq j} u_{ij}$.

\smallskip

\noindent {\em Subcase $r = 1$}:
The six unknowns arise from the entries in the matrices
$$
L_1 = \begin{pmatrix} l_{11} \\ l_{21} \end{pmatrix}, \quad
P_1 =  \begin{pmatrix} 2p_{11} \end{pmatrix} , \quad
\Lambda = \begin{pmatrix}
\lambda_{11} & \lambda_{12} \\
\lambda_{12} & \lambda_{22} \end{pmatrix},
$$
and the six equations take the form
$$
\begin{array}{rcl}
2 p_{11}(1+l_{11}+l_{21}) &=& (2u_{11} + u_{12} + u_{13})/u_{++} \\
2 p_{11} l_{11} (u_{++} - l_{11}\lambda_{11} - l_{21}\lambda_{12}) &=& u_{12} \\
2 p_{11} l_{21} (u_{++} - l_{11}\lambda_{12} - l_{21}\lambda_{22}) &=& u_{13} \\
2 p_{11} l_{11} (1+l_{11}+l_{21}) &=& (u_{12} + 2u_{22} + u_{23})/u_{++} \\
2 p_{11} l_{11} l_{21} (\lambda_{12} + u_{++}) &=& u_{23} \\
2 p_{11} l_{21} (1 + l_{11} + l_{21}) &=& (u_{13} + u_{23} + 2u_{33})/u_{++}.
\end{array}~~~~~$$
This system has a unique solution which writes the
  unknowns as  rational functions in the~$u_{ij}$.

\bigskip

\noindent {\em Subcase $r = 2$}:
The six unknowns arise from the entries in the matrices
$$
L_1  = \begin{pmatrix} l_{11}  & l_{12} \end{pmatrix},\quad
P_1 = \begin{pmatrix}
2p_{11} & p_{12} \\
p_{12} & 2p_{22} \end{pmatrix},\quad
\Lambda = \begin{pmatrix} \lambda_{11} \end{pmatrix},
$$
and the six equations take the form
$$
\begin{array}{rcl}
2p_{11}(1+l_{11}) + p_{12}(1+l_{12}) &=& (2u_{11} + u_{12} + u_{13})/u_{++} \\
p_{12} (l_{11} l_{12} \lambda_{11} + u_{++}) &=& u_{12} \\
(2 p_{11} l_{11} + p_{12} l_{12}) (u_{++} - l_{11} \lambda_{11}) &=& u_{13} \\
p_{12}(1+l_{11}) + 2p_{22}(1+l_{12}) &=& (u_{12} + 2u_{22} + u_{23})/u_{++} \\
(p_{12}l_{11} + 2p_{22}l_{12})(u_{++} - l_{12}\lambda_{11}) &=& u_{23} \\
(2p_{11}l_{11} + p_{12}l_{12})(1+l_{11}) + (p_{12} l_{11} + 2 p_{22} l_{12})(1+l_{12}) &=& (u_{13} + u_{23} + 2u_{33})/u_{++}.

\end{array}~~~~~~~~$$
This system has six complex solutions for a general data matrix~$U$.
In the other words, the $6$ unknowns
$l_{\cdot \cdot}, p_{\cdot \cdot}$, and $\lambda_{11}$
are algebraic functions of degree $6$ in  $u_{11},u_{12},\ldots,u_{33}$.
\qed
\end{example}

Here is the symmetric version of Theorem \ref{conj:duality},
as suggested by Theorem \ref{thm:MLSymmValues}:

\begin{theorem}[Draisma and Rodriguez \cite{DR}]
\label{conj:dualitysym}
The ML degree for symmetric
$n \times n$-matrices (\ref{eq:Pmatrixsym}) of rank $r$ is equal to  the ML degree for
 symmetric $n \times n$-matrices (\ref{eq:Pmatrixsym}) of rank $n-r+1$.
\end{theorem}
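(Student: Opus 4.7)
The plan is to establish the symmetric duality by constructing an explicit birational correspondence on the critical locus, paralleling the strategy that presumably underlies the proof of Theorem \ref{conj:duality} in \cite{DR}. My starting point is the symmetric local kernel formulation (\ref{eq:SymmFormulation1}). Writing $c = \sum_{i \leq j} u_{ij}$, dividing the critical equation entrywise by $P$ (which is non-vanishing at a critical point by genericity of $U$), and rearranging, each critical point yields a pair of symmetric matrices $(P, Q)$ satisfying
\begin{equation*}
P \star Q \,=\, D \star U, \qquad \mathrm{rank}(P) \,\leq\, r, \qquad \mathrm{rank}(Q - c\,\mathbf{1}_n) \,\leq\, n-r,
\end{equation*}
where $q_{ij} = d_{ij} u_{ij}/p_{ij}$ and the third condition comes from the factorization $Q - c\,\mathbf{1}_n = L^T \Lambda L$ with $L$ of size $(n-r) \times n$. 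This reformulates the rank-$r$ critical locus as an incidence variety $\mathcal{I}_r$ inside $\mathrm{Sym}_n(\C) \times \mathrm{Sym}_n(\C)$ which manifests an apparent symmetry between the rank of $P$ and the rank of $Q - c\,\mathbf{1}_n$.

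Next I would exhibit $\mathcal{I}_r$ and $\mathcal{I}_{n-r+1}$ as birationally equivalent. The natural involution to test is $\tau \colon (P, Q) \mapsto (Q - c\,\mathbf{1}_n,\; P + c\,\mathbf{1}_n)$, which exchanges the two rank conditions and transforms the Hadamard identity in a controlled way (with a corrective shift absorbed into a modified data matrix). To match the target rank $n - r + 1$ in place of $n - r$, I would enlarge the incidence by one extra Lagrange-multiplier direction: a rank-$(n-r)$ matrix on the dual side is interpreted as a degenerate limit of rank-$(n-r+1)$ matrices, and conversely a generic rank-$(n-r+1)$ critical point projects onto a rank-$(n-r)$ companion carrying a compatible kernel structure. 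Making this pairing compatible with the Hadamard relation and the data matrix $D \star U$ is the computational core of the argument.

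The main obstacle is exactly this rank off-by-one, and I expect it is where the real content of \cite{DR} lies. Resolving it scheme-theoretically likely requires an intersection-theoretic calculation on the conormal variety of the symmetric determinantal locus, using the self-dual structure of the Hadamard-inversion $P \mapsto P^{\star(-1)}$ on $\mathrm{Sym}_n(\C)$. A secondary verification is that the diagonal scaling $D$ in (\ref{eq:SymmFormulation1}) is consistent with the involution. An alternate route would be to symmetrize the non-symmetric proof directly, by checking that the Draisma--Rodriguez bijection commutes with transposition $P \mapsto P^T$ and then restricting to its fixed locus; the subtlety there is that the symmetric likelihood function (\ref{eq:logsymmlike}) with its diagonal factors of $2$ is not the naive restriction of the non-symmetric likelihood (\ref{eq:loglike}) to symmetric matrices, so an explicit bookkeeping of how the restriction interacts with the critical equations would still be needed.
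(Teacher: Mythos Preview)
The paper does not contain a proof of this statement. Immediately after Theorem~\ref{conj:dualitysym} the authors write that it ``was stated as a conjecture in the first version of this paper, and proved later in~\cite{DR}.'' So there is no in-paper proof to compare your proposal against; the result is simply imported from Draisma--Rodriguez.

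That said, your proposal has a concrete gap worth naming. You set up the incidence via the raw critical equation $P \star Q = D \star U$ with $Q = L^T \Lambda L + c\,\mathbf{1}_n$, and then look for an involution swapping $P$ with $Q - c\,\mathbf{1}_n$. This is not the pairing that underlies the duality. The paper records the actual Draisma--Rodriguez bijection in Theorem~\ref{conj:Constant}: the dual critical point $Q$ is characterized by
\[
P \star Q \,=\, \Omega_U, \qquad (\Omega_U)_{ij} \,=\, \frac{u_{ij}\,u_{i+}\,u_{+j}}{(u_{++})^3},
\]
and the symmetric analogue is illustrated in Example~\ref{ex:3x3numeric}. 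The crucial feature is that $\Omega_U$ has \emph{rank one}, not full rank like $D\star U$. This is exactly what resolves your ``off-by-one'': since every critical point has the prescribed marginals $p_{i+} = u_{i+}/u_{++}$ (the Remark at the end of Section~\ref{sec:Conjectures}), dividing the rank-one matrix $\Omega_U$ entrywise by a rank-$r$ critical point $P$ produces a matrix of the correct complementary rank $n-r+1$, and one checks it satisfies the dual critical equations. Your shift map $\tau$ does not achieve this because $D\star U$ generically has full rank, so Hadamard-dividing by $P$ gives no usable rank bound. The intersection-theoretic machinery you anticipate is not needed once the right rank-one pivot $\Omega_U$ is in hand.
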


This was  stated as a conjecture in the first version of this paper,
and proved later in~\cite{DR}.

\section{Solutions using numerical algebraic geometry}\label{sec:NAG}

 Theorems \ref{thm:MLvalues} and \ref{thm:MLSymmValues}
document considerable advances relative to  the
 computational results found earlier
 in \cite[\S 5]{HKS}. In this project, we
used numerical algebraic geometry \cite{BHSW}
to compute the ML degrees by solving
the local kernel formulation (\ref{eq:formulation5}) which we explain in this section.

The statistical problem addressed here is to find the global maximum of
a likelihood function $\ell_U$ over a matrix model $\mathcal{M}$
given by rank constraints.
For this class of problems, the use of numerical algebraic geometry
has the following significant advantage over symbolic computations.
After having solved the likelihood equations only once, for one
generic data matrix $U_0$, all subsequent computations
for other data matrices $U$ are much faster.
Numerical homotopy continuation will start from the critical points
of $\ell_{U_0}$ and transform them into the
critical points of $\ell_{U}$.
Intuitively speaking, for a fixed model $\mathcal{M}$,
{\em the homotopy amounts to changing the data}.
We believe that our methodology will be useful for a wider range of maximum likelihood problems than those treated here,
and we decidedly agree with the statement in
\cite[\S 5]{BR} that
{\em ``... homotopy continuation algorithms often provide substantial advantages over iterative methods commonly used in statistics''}.

We discuss below two options for the preprocessing stage
of solving the local kernel formulation (\ref{eq:formulation5})
for generic $U_0$.  The first option is to use a single
homotopy built from an upper bound discussed in Section~\ref{sec:Equations},
most notably a polyhedral homotopy built from the polyhedral root count.
The second option is to use a sequence of homotopies that intersect
the hypersurfaces corresponding to each equation, most notably via regeneration \cite{HSW}.


Parallel computation is an essential feature of numerical algebraic geometry.
Both preprocessing, by solving a generic data set once,
and each subsequent solve for given specific data can be performed in parallel.
In our case, we used a 64-bit Linux cluster with $160$ processors
to perform the computations summarized in Table \ref{tab:RunTime}
which tracked each path on a separate processor.
For instance, for  $(m,n,r) = (4,5,2)$, there are $843$ paths,
to be distributed among the $160$ processors.
Using adaptive precision~\cite{AMP}, this takes $20$ seconds
while the same computation performed sequentially takes about $20$ minutes on a typical~laptop.

\begin{example} \rm
The following data matrix is attributed to
the fictional character DiaNA  in \cite[Example 1.3]{ASCB}.
It represents her alignment of two DNA sequences of length $u_{++} = 40$:
 $$ U \quad = \quad \begin{pmatrix}
4 & 2 & 2 & 2 \\
2 & 4 & 2 & 2 \\
2 & 2 & 4 & 2 \\
2 & 2 & 2 & 4
\end{pmatrix}.
$$
According to Table \ref{tab:RunTime}, it took
$257$ seconds to solve the first instance for $(m,n,r) = (4,4,2)$, but now
every subsequent run takes only $4$ seconds.
In that solving step, the integers $u_{ij}$ become parameters over the complex numbers.
For DiaNA's data matrix $U$, the $191$ complex critical points degenerate to
$25$ real critical points, each of which is positive, and $166$~nonreal critical points.
See Theorem~\ref{thm:DiaNA} for additional information regarding
the critical points.
\qed
\end{example}

\begin{table}
  \centering
  \begin{tabular}{cccccccc}
  $(m,n,r)$ & $(4,4,2)$ & $(4,4,3)$ & $(4,5,2)$ & $(4,5,3)$ & $(5,5,2)$  & $(5,5,4)$ \\
  \hline
  Preprocessing & 257 & 427 & 1938 & 2902 & 348555 &  146952   \\
  Solving & 4 & 4 & 20 & 20 & 83 &  83  \\
\end{tabular}
\caption{
Comparison of running times for preprocessing and
 subsequent solving (in seconds)}\label{tab:RunTime}
\end{table}

Three advantages of the local kernel formulation (\ref{eq:formulation5}) are that
it is a square system with polynomials of degree at most $4$,
it is sparse in terms of the number of monomials appearing, and
it has a natural product structure.
These structures are clearly visible from the systems in Example~\ref{ex:33}, and
they are used to derive the smaller upper bounds in Table \ref{tab:Bounds}.
In what follows, we shall describe our preprocessing and how we can use its output to easily compute all
critical points of $\ell_U$ for a given data matrix $U$.
We also analyze some specific examples.
An introduction to numerical algebraic geometry and homotopy continuation can be found in \cite{SW}
and more details using {\tt Bertini} to perform these
computations in the~forthcoming~book~\cite{BHSW}.

\smallskip

For a square polynomial system $F$, {\em basic homotopy continuation} computes a finite
set $\sS$ of complex roots of $F$ which contains all  isolated roots.  Here, ``computes $\sS$'' means
numerically computing the coordinates of each point in $\sS$,
and to be able to approximate these  to arbitrary accuracy.
Numerical approximations to nonsingular solutions can be certified
using the software {\tt alphaCertified} \cite{alphaCertified}. This certification
can also determine if the solution is real or positive.
To compute $\sS$, we first construct a family of polynomial systems $\sF$ containing $F$ and
then compute the isolated roots for a sufficiently general $G\in\sF$.  Finally,
one tracks the solution paths starting with the isolated roots as $G$ deforms to $F$ inside~$\sF$.

Fix $(m,n,r)$ and let $\sF := \sF_{m,n,r}$ be the family of polynomial systems
(\ref{eq:formulation5}) for $U\in\C^{m\times n}$.
The generic root count on $\sF$ is the ML degree of $\mathcal{V}_r$.
In particular, for any generic $U_0 \in\C^{m\times n}$
the number of roots of the corresponding system $F_{U_0} \in\sF$ is
the ML degree of $\mathcal{V}_r$.  Suppose further that we know the roots of $F_{U_0}$.
Then, for any matrix $U\in\C^{m\times n}$, we can compute the isolated roots of the
corresponding polynomial system $F_U$ by tracking the ML degree number
of solutions paths starting with the roots of $F_{U_0}$ as $U_0$ and $F_{U_0}$ deform to $U$ and $F_U$.

Since the family $\sF$ is parameterized by the linear space $\C^{m\times n}\cong\mathbb{R}^{2mn}$,
we can connect $U_0$ to $U$ along a line segment.
If $U_0$ is not in a sufficiently general position with respect to $U$, e.g., both real,
this segment may contain matrices for which the corresponding
system has a root count that is different from the ML degree.  To avoid this,
we apply the  {\em gamma trick} of \cite{MS}.
For $\gamma\in\bS^1 \subset \C^*$, the trick deforms from $U_0$ to $U$ along the arc parameterized~by
\begin{equation}
\label{eq:gammatrick}
 \frac{\gamma t}{1+(\gamma-1) t} \cdot U_0 \,+
\, \frac{1-t}{1+(\gamma-1)t} \cdot U \quad \hbox{for} \quad t\in[0,1].
\end{equation}
For all but finitely many values $\gamma\in\bS^1$,
the root count for the corresponding polynomial system along this arc, except possibly at $U$ when $t = 0$,
is the ML degree.

We conclude our discussion on deforming from a known set of critical points with a
practical issue.  Due to choices of affine patches, the local kernel formulation
(\ref{eq:formulation5}), as written, is not suitable for a nongeneric data matrix $U$.
Once given a data matrix $U$, we simply choose random affine patches as in \cite{BHPS}.
Let $O_1,O_2\in\R^{r\times r}$, $O_3\in\R^{m\times m}$, and $O_4\in\R^{n\times n}$ be random orthogonal matrices
and $L_1$, $P_1$, $R_1$, and $\Lambda$ be as before.  Then, we use (\ref{eq:formulation5}) with
$$
L = O_1 \cdot \begin{pmatrix} L_1 &\! -I_{m-r}\end{pmatrix} \cdot O_3^T,\,\,\,
P = O_3 \cdot \begin{pmatrix} P_1  &\! P_1 R_1 \\ L_1 P_1 &\! L_1 P_1 R_1 \end{pmatrix} \cdot O_4^T,\, \hbox{~and~} \,
R = O_4 \cdot \begin{pmatrix} R_1 \\ - I_{n-r} \end{pmatrix} \cdot O_2^T.
$$
The homotopy (\ref{eq:gammatrick})
quickly computes the isolated critical points
for any given data matrix $U$  provided that we already know
the critical points for a sufficiently general data matrix~$U_0$.

\smallskip

We now discuss the two options for {\em preprocessing} mentioned above,
namely polyhedral homotopies and regeneration.
A summary of our computations with these two methods, now using serial processing
with double precision, are presented in Table~\ref{tab:PreprocessTime}.
The last pair of entries suggest that the two methods exhibit complementary 
behavior with respect to the duality of Theorem \ref{conj:duality}. In both cases,
$191$ roots are found, and these are essentially the same roots,
by Theorem \ref{conj:Constant} below. For instance, using polyhedral homotopy,
the rank $2$ case can be solved in  $1869$ seconds
and then we may read off the solutions for rank $3$ using~(\ref{eq:strongduality}).

The first approach to solve the equations for $U_0$ is to use basic
homotopy continuation in the family $\sP$ of polynomial systems
that arise from some relevant structure.
The generic root count on $\sP$ constructed from various structures
are presented in Table \ref{tab:Bounds}.
After computing the roots for a general element of $\sP$,
we return to basic homotopy continuation for computing the roots of $F_{U_0}$.
Table~\ref{tab:PreprocessTime} summarizes using a polyhedral approach
implemented in {\tt PHC} \cite{PHC}
where the family $\sP$ is constructed based on the
Newton polytopes of the given equations.

The second approach is based on intersecting the given hypersurfaces iteratively.
This can be
advantageous when the degree of the intersection is significantly less than the
product of the degrees. To be explicit,
if $\sS$ is a pure $k$-dimensional variety ($k > 0$) and $\sH$ is a hypersurface,
intersection approaches can be advantageous when
the degree of the pure $(k-1)$-dimensional part of $\sS\cap\sH$
is less than $\deg \sS \cdot \deg \sH$.
Regeneration is an intersection approach that builds from
a product structure of the given system. We shall now discuss~this.

\begin{table}
  \centering
  \begin{tabular}{ccccccc}
  $(m,n,r)$ & $(3,3,2)$ & $(3,4,2)$ & $(3,5,2)$ & $(4,4,2)$ & $(4,4,3)$ \\
  \hline
  Polyhedral using {\tt PHC} & 4 & 120 & 2017 & 23843 & 1869 \\
  Regeneration using {\tt Bertini} & 6 & 61 & 188 & 2348 & 7207 \\
\end{tabular}
\caption{
Running times for preprocessing in serial using double precision (in seconds)}\label{tab:PreprocessTime}
\end{table}

We first consider the classical idea of solving polynomial systems
using successive intersections and then discuss
how to build from a product structure.
Consider $N$ polynomials $f_1,\dots,f_N$ in $N$ variables,
defining hypersurfaces $\sH_1,\dots,\sH_N$.
One advantage of a square system is that the isolated solutions of $f_1 = \cdots = f_N = 0$ arise by computing
the codimension $i$ components of $\sH_1\cap\cdots\cap\sH_i$ sequentially for $i = 1,2,\dots,N$.
In fact, every codimension $i+1$ component of $\sH_1\cap\cdots \cap \sH_i\cap\sH_{i+1}$
arises as the intersection of a codimension $i$ component $C$ of
$\sH_1\cap\cdots\cap\sH_i$ and the hypersurface $\sH_{i+1}$, where $C$ is
not contained in $\sH_{i+1}$.

The use of the product structure arises from intersecting an algebraic set of
pure codimension $i$ with a linear space of dimension $i$ yielding finitely many points.
The first step is a hypersurface intersected with a line.
If $\sL_2,\dots,\sL_N$ are general hyperplanes, the hypersurface
$\sH_1$ is represented by the isolated points in $\sH_1\cap\sL_2\cap\cdots\cap\sL_N$.
Such points can be computed by solving a univariate polynomial, namely
$f_1$ restricted to the line $\sL_2\cap\cdots\cap\sL_N$.
Let $1\leq i<N$ and $C_i$ be the pure one-dimensional component of
$\sH_1\cap\cdots\cap\sH_i\cap\sL_{i+2}\cap\cdots\cap\sL_N$.
Now, {\em basic regeneration} computes $C_i\cap\sH_{i+1}$ from $C_i\cap\sL_{i+1}$ as follows.
Let $\sM_1,\ldots,\sM_k$ be hyperplanes defined by
sufficiently general linear polynomials $\ell_1,\ldots,\ell_k$
that represent a linear product decomposition of $f_{i+1}$.
Let $\sM = \bigcup_{j=1}^k \sM_j$.   Basic homotopy continuation
computes $C_i\cap\sM_j$ from $C_i\cap\sL_{i+1}$ for $j = 1,\dots,k$.
Their union is $C_i\cap\sM$.  Applying basic homotopy continuation once more
yields $C_i\cap\sH_{i+1}$ by deforming~from~$C_i\cap\sM$.

For the preprocessing approaches above, we can certify that the
set of approximations obtained correspond to distinct solutions using
{\tt alphaCertified}.  At each stage of the regeneration and at the
end of the computation, we can perform one additional
test to confirm that we have obtained all of the solutions: the trace test \cite{SVW}.
During regeneration, the centroid of the solutions must move linearly as
the hyperplane $\sL_N$ is moved linearly.
Moreover, the centroid of the critical $m\times n$-matrices
must move linearly as the data matrix $U$ moves linearly.
With these tests, we are able to claim, with high probability,
that our initial randomly selected data matrix $U_0$
was sufficiently generic, and Theorems~\ref{thm:MLvalues} and~\ref{thm:MLSymmValues}~hold.

\smallskip

After computing the positive critical points for a given data matrix~$U$, we
identify the local
maximizers by analyzing the Hessian of the corresponding Lagrangian function, namely
$$L(P,\lambda) \,\,=\,\, \log\ell_U(P) + \sum_{i=1}^k \lambda_i g_i(P),$$
where $\mathcal V_r$ is defined by the vanishing of the polynomials $g_1,\dots,g_k$.
If $P$ is a critical point of rank $r$, let $\lambda\in\C^k$ be the unique vector
such that $\nabla L(P,\lambda) = 0$.  Then, $P$ is a local maximizer if the
matrix $N^T\cdot HL(P,\lambda)\cdot N$ is negative semidefinite
where $HL(P,\lambda)$ is the Hessian of $L$ and
the columns of $N$ form a basis for the tangent space of $\mathcal V_r \times \C^k$ at
$(P,\lambda)$.

In the remainder of this section we present three concrete numerical examples.

\begin{example}
\label{ex:3x3numeric} \rm
We consider the symmetric matrix model (\ref{eq:Pmatrixsym}) for $n=3$
with the data
$$
u_{11} = 10, \,
u_{12} = 9, \,
u_{13}  = 1, \,
u_{22}  = 21,\,
u_{23} = 3,\,
u_{33}  = 7. $$
All six critical points of the likelihood function
(\ref{eq:logsymmlike}) are real and positive. They are
$$
\begin{matrix}
p_{11} & p_{12} & p_{13} & p_{22} & p_{23} & p_{33} && \,\,\,{\rm log} \,\ell_U(p) \\
0.1037 &  0.3623 & 0.0186 & 0.3179 & 0.0607 & 0.1368 &&    -82.18102 \\
0.1084 &  0.2092 & 0.1623 & 0.3997 & 0.0503 & 0.0702 &&   -84.94446 \\
0.0945 &  0.2554 & 0.1438 & 0.3781 & 0.4712 & 0.0810  &&   -84.99184 \\
0.1794 &  0.2152 & 0.0142 &  0.3052 & 0.2333 &  0.0528 && -85.14678 \\
0.1565 &  0.2627 & 0.0125 & 0.2887 & 0.2186 & 0.0609 &&    -85.19415 \\
0.1636 &  0.1517 & 0.1093 & 0.3629 & 0.1811 & 0.0312 &&   -87.95759 \\
\end{matrix}
$$
The first three points are local maxima in $\Delta_5$ and the last
three points are local minima. These six points define an extension
of degree $6$ over $\mathbb{Q}$. For instance, via {\tt Macaulay\,2} \cite{M2},
the minimal polynomial
for the last coordinate is
$9528773052286944p_{33}^{6} - 4125267629399052p_{33}^{5} + 713452955656677p_{33}^{4} - 63349419858182p_{33}^{3} + 3049564842009p_{33}^{2} - 75369770028p_{33} + 744139872$.
As we shall see in Proposition \ref{prop:galois},
the Galois group of this irreducible
polynomial is solvable, so we can
express each of the coordinates in radicals.
The last coordinate, via~{\tt RadiRoot}~\cite{Radiroot},~is

 \smallskip
 \noindent
 $ p_{33}  =
\frac{16427}{227664} + \frac{1}{12}\!\left(\zeta - \zeta^{2}\right)\omega_2 - \frac{66004846384302}{19221271018849}\omega_2^2 + \left(
  \frac{14779904193}{211433981207339} \zeta^{2}
- \frac{14779904193}{211433981207339}\zeta   \right)\omega_1\omega_2^2 + \frac{1}{2}\omega_3
$,

\smallskip
\noindent where $\zeta$ is a primitive third root of unity,
$\,\omega_1^2  =  94834811/3$, and
$$
\begin{matrix} \medskip
\omega_2^3 &  =&  \left(\frac{5992589425361}{150972770845322208}\zeta - \frac{5992589425361}{150972770845322208}\zeta^{2}\right) + \frac{97163}{40083040181952}\omega_1,
\qquad \\ \smallskip
\omega_3^2 & = &  \frac{5006721709}{1248260766912} + \left(\frac{212309132509}{4242035935404}\zeta - \frac{212309132509}{4242035935404}\zeta^{2}\right)\omega_2 - \frac{2409}{20272573168}\omega_1\omega_2
\\ & &  - \frac{158808750548335}{76885084075396}\omega_2^2
+ \left(  \frac{17063004159}{422867962414678}\zeta^{2} - \frac{17063004159}{422867962414678}\zeta \right)\omega_1\omega_2^2 .
\end{matrix}
$$
We finally note that the six critical points can be matched into three pairs
  so that (\ref{eq:strongduality}) holds: the Hadamard product of
     points  1 and 6 agree with that of points 2 and 5, and that of points 3 and 4.
Thus this example illustrates the symmetric matrix version of
Theorem~\ref{conj:Constant}. \qed
\end{example}

\begin{example}
\label{ex:4x5numeric} \rm
Let $m = 4, n = 5$ and consider the data matrix
$$
U \quad = \quad
\begin{pmatrix}
2084    &       1  &         1   &        1  &         4 \\
         4  &     23587    &       5  &         3   &        1 \\
         6   &        3  &     41224    &       3 &          2 \\
         4   &        6 &          2  &      8734    &       4
\end{pmatrix}.
$$
For $r = 2$ and $r= 3$, this instance has the expected number
$843$ of distinct complex critical points.
In both cases,  $555$ critical points are real, and $25$ of these are positive.
Consider  the $25$ critical points in $\Delta_{19}$. For $r = 2$ precisely seven  are local maxima,
and  for $r = 3$ precisely six are local maxima.
We shall list them explicitly in Examples \ref{ex:em45} and \ref{ex:em45b} respectively.
\qed
\end{example}

\begin{example}
\label{ex:5x5numeric} \rm
Let $m = n = 5$, with the non-symmetric model, and consider the data
$$
U \quad = \quad
\begin{pmatrix}
2864 & 6 & 6 & 3 & 3 \\
2 & 7577 & 2 & 2 & 5 \\
4 & 1 & 7543 & 2 & 4 \\
5 & 1 & 2 & 3809 & 4 \\
6 & 2 & 6 & 3 & 5685
\end{pmatrix}.
$$
For $r = 2$ and $r= 4$, this instance has the expected number of
$6776$ distinct complex critical points.
In both cases,  $1774$ of these are real and $90$ of these are
real and positive.
This illustrates the last statement in Theorem \ref{conj:Constant}.
The number of local maxima for $r = 2$ equals $15$,
and the number of local maxima for $r = 4$ equals $6$.
For $r = 3$, we have $61326$ critical points, of which
$15450$ are real. Of these, $362$ are positive and
$25$ are local maxima.
\qed
\end{example}

\section{Further results and computations} \label{sec:Conjectures}

The numerical algebraic geometry techniques described in
Section~\ref{sec:NAG} have the advantage that they permit
fast experimentation with non-trivial instances.
This led us to a variety of conjectures, including those
concerning ML duality.
Before we come to our discussion of duality, we briefly state
 a conjecture regarding the ML degree of $3 \times n$-matrices of rank $2$.

\begin{conjecture}\label{conj:3ncase}
For $m = 3$ and $n \geq 3$,
the ML degree of the variety $\,\mathcal{V}_2\,$
equals $\,2^{n+1} - 6$.
\end{conjecture}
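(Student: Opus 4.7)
My plan is to compute the ML degree of $\mathcal{V}_2$ as a signed Euler characteristic on a natural smooth resolution, and then evaluate that characteristic to obtain $2^{n+1}-6$.

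First, I would construct the resolution
\[
\widetilde{\mathcal{V}}_2 \;:=\; \bigl\{\,(P,\ell)\in\PP^{3n-1}\times\PP^2 \,:\, \ell^T P = 0\,\bigr\}.
\]
Projection to $\PP^2$ exhibits $\widetilde{\mathcal{V}}_2$ as a $\PP^{2n-1}$-bundle, so it is smooth of dimension $2n+1$. Projection to $\mathcal{V}_2$ is birational: it is an isomorphism over the smooth locus $\mathcal{V}_2\setminus\mathcal{V}_1$, where the left kernel $\ell$ is unique, and contracts a $\PP^1$ over each rank-$1$ matrix. Since the critical points of $\ell_U$ on $\mathcal{V}_2$ all lie in the regular locus for generic $U$, pulling back the log-likelihood function along the first projection yields $\mathrm{MLdeg}(\mathcal{V}_2) = \mathrm{MLdeg}(\widetilde{\mathcal{V}}_2)$ relative to the pullback of the boundary divisor $\{p_{11}\cdots p_{3n}\,p_{++}=0\}$.

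Second, because $\widetilde{\mathcal{V}}_2$ is smooth, Huh's theorem \cite{Huh} gives $\mathrm{MLdeg}(\widetilde{\mathcal{V}}_2) = (-1)^{2n+1}\,\chi(\widetilde{\mathcal{V}}_2^{\,0})$, where $\widetilde{\mathcal{V}}_2^{\,0}$ denotes the very affine open. I would evaluate this Euler characteristic using the $\PP^2$-projection and stratify the base by the support $S\subseteq\{1,2,3\}$ of $\ell$. Over each stratum, the fiber is the complement in $\PP^{2n-1}$ of an explicit arrangement of $3n+1$ hyperplanes coming from the nonvanishing of the $n$ entries in each row of $P$ (with the third row expressed linearly in terms of the first two via $\ell$) and the single condition $p_{++}\neq 0$. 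Summing these contributions gives a closed formula for $\mathrm{MLdeg}_n$.

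Third, I would establish the recursion $\mathrm{MLdeg}_{n+1} = 2\,\mathrm{MLdeg}_n + 6$, which together with the base case $\mathrm{MLdeg}_3=10$ from Theorem~\ref{thm:MLvalues} is equivalent to the desired formula $2^{n+1}-6$. Geometrically, passing from $n$ to $n+1$ adjoins three new hyperplanes per fiber (one for each entry of the new column). A deletion-restriction argument on the Orlik-Solomon algebra of the fiber arrangement should produce the multiplicative factor $2$, from the two independent conditions among the three new hyperplanes after accounting for the linear dependence enforced by $\ell$; the additive constant $+6$ should emerge from the boundary strata of $\PP^2$ where one or more $\ell_i$ vanish.

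The main obstacle is that the $3n+1$ hyperplanes in each fiber are \emph{not} in general position: the family enforcing row-$3$ nonvanishing is linearly determined by the first two families via the chosen $\ell$. An honest deletion-restriction must track how this collinearity interacts with column deletion, and I expect this combinatorial bookkeeping to be the technical heart of the proof. An alternative and possibly cleaner route is a direct analysis of the square system (\ref{eq:formulation5}) specialized to $(m,r)=(3,2)$: inspection of the $r=2$ system in Example~\ref{ex:33} shows that each additional column of $U$ introduces exactly three new equations involving one new entry of $\Lambda$ and two new entries of $R_1$, with a bilinear/trilinear structure suggesting that the polyhedral mixed volume along the new variables is precisely $2$, which would yield the recursion via Bernstein's theorem applied column by column.
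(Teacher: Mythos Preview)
The paper does not prove this statement; it is stated as a conjecture and supported only by numerical verification (via {\tt Macaulay\,2} and {\tt Bertini}) for $3\le n\le 10$. So there is no argument in the paper to compare your plan against, and you are attempting strictly more than the authors do.

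That said, your plan has a genuine gap at the very first step. Huh's theorem requires not just smoothness but that the variety be \emph{very affine}, i.e., that it embed as a closed subvariety of a torus via the monomials appearing in $\ell_U$. Your open set $\widetilde{\mathcal{V}}_2^{\,0}$, defined as the preimage of $\mathcal{V}_2^{0}$ under the resolution, fails this: the map to the torus factors through $\mathcal{V}_2^{0}$ and therefore contracts each exceptional $\PP^1$ over $\mathcal{V}_1^{0}$ to a point. Concretely, the pulled-back log-likelihood depends only on $P$, hence is constant along every exceptional fiber, so its critical locus on $\widetilde{\mathcal{V}}_2^{\,0}$ is positive-dimensional and the quantity ``$\mathrm{MLdeg}(\widetilde{\mathcal{V}}_2)$'' is not even finite in the naive sense. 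Thus neither the equality $\mathrm{MLdeg}(\mathcal{V}_2)=\mathrm{MLdeg}(\widetilde{\mathcal{V}}_2)$ nor the invocation of Huh's theorem on $\widetilde{\mathcal{V}}_2^{\,0}$ is justified.

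A natural repair is to excise the exceptional divisor as well, i.e., to work with the smooth very affine variety $\mathcal{V}_2^{0}\setminus\mathcal{V}_1^{0}$. But then you must prove that $\mathrm{MLdeg}(\mathcal{V}_2)=(-1)^{2n+1}\chi(\mathcal{V}_2^{0}\setminus\mathcal{V}_1^{0})$, and the paper itself flags exactly this as open (``a suitably modified statement is expected to be true''). Without that bridge, the Euler-characteristic computation, the stratification over $\PP^2$, and the deletion--restriction recursion are all computing a number that has not been tied to the ML degree. Your alternative route via Bernstein's theorem on the local kernel system is more promising in principle, but note that the polyhedral bounds in Table~\ref{tab:Bounds} are far from sharp (e.g.\ $472$ versus the true value $26$ for $(3,4,2)$), so a naive mixed-volume count will overshoot; you would need to explain precisely which solutions are spurious and why their count also satisfies the recursion.
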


The first three values already appeared in Theorem \ref{thm:MLvalues}.
We tested this formula by solving the equations of
the local kernel formulation (\ref{eq:formulation5}).
This was done independently in {\tt Macaulay\,2} and {\tt Bertini}.
With these computations,
we verified Conjecture \ref{conj:3ncase} up to $n = 10$.
This conjecture, if correct, would furnish a simple and natural
sequence of models, namely $3 \times n$-matrices of rank $2$,
whose ML degree grows exponentially in the number of states.

\smallskip

We next formulate a refined version of
the duality statement in  Theorem~\ref{conj:duality}.
Given a data matrix $U$ of format $m \times n$,  we write $\Omega_U$ for the $m \times n$-matrix
whose $(i,j)$ entry equals
$$ \frac{u_{ij} u_{i+} u_{+j}}{(u_{++})^3}. $$
The following statement also appeared as a conjecture in the first
version of our paper, and it was proved
by Draisma and Rodriguez in their article \cite{DR}
on maximum likelihood duality.

\begin{theorem}[\cite{DR}] \label{conj:Constant}
Fix $m \leq n$ and $U$ an $m \times n$-matrix with strictly positive integer entries.
There exists a bijection
 between the complex critical points $P_1,P_2,\ldots,P_s$
 of the likelihood function $\,\ell_U$ on $\mathcal V_r$ and
 the complex critical points $Q_1,Q_2,\ldots, Q_s$ of $\,\ell_U$ on $\mathcal V_{m-r+1}$
 such that
 \begin{equation}
\label{eq:strongduality}
 P_1 \star Q_1 \, = \,
 P_2 \star Q_2  \, = \,
\, \cdots \,\,=\,
 P_s \star Q_s \,\, = \,\,  \Omega_U.
 \end{equation}
 In particular, this bijection preserves reality, positivity, and rationality of the critical points.
 \end{theorem}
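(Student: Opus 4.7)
The plan is to realize Theorem \ref{conj:Constant} as a statement about the explicit Hadamard involution $\Phi: P \mapsto \Omega_U \star P^{\star(-1)}$, whose $(i,j)$ entry is $\Omega_{U,ij}/p_{ij}$. This map is an involution on matrices with no zero entries, and it manifestly preserves reality, positivity, and rationality, since $\Omega_U$ is rational in $U$ and positive when $U$ is. It therefore suffices to show that if $P$ is a critical point of $\ell_U$ on $\mathcal{V}_r$, then $Q := \Phi(P)$ is a critical point of $\ell_U$ on $\mathcal{V}_{m-r+1}$; the reverse direction then follows from involutivity.

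I would first pin down the marginals of $P$. Normalizing $p_{++} = 1$, the critical-point condition (\ref{eq:formulation1}) reads $U \star P^{\star(-1)} - u_{++}\mathbf{1}\mathbf{1}^T \in T_P^\perp$. Since $\mathcal{V}_r$ is invariant under left and right multiplication by diagonal matrices, $T_P$ contains the matrices $E_{ii}P$ and $PE_{jj}$, and Frobenius pairing against these forces the marginal identities $p_{i+} = u_{i+}/u_{++} =: \alpha_i$ and $p_{+j} = u_{+j}/u_{++} =: \beta_j$. Set $D_\alpha := \mathrm{diag}(\alpha)$, $D_\beta := \mathrm{diag}(\beta)$, so $\alpha = P\mathbf{1}$ and $\beta = P^T\mathbf{1}$. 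By the kernel formulation (\ref{eq:formulation3}) there exist $L \in \C^{(m-r)\times m}$, $R \in \C^{n\times(n-r)}$, and $X$ with $LP = PR = 0$ and $U \star P^{\star(-1)} = u_{++}\mathbf{1}\mathbf{1}^T + L^T X R^T$.

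Substituting $u_{ij}/p_{ij} = u_{++} + (L^T X R^T)_{ij}$ into $q_{ij} = u_{ij}u_{i+}u_{+j}/(u_{++}^3 p_{ij})$ gives the structural formula
\[
Q \,=\, \alpha \beta^T \,+\, u_{++}^{-1}\, D_\alpha L^T X R^T D_\beta,
\]
which exhibits $Q$ as rank-one plus rank-$(m-r)$; hence $\operatorname{rank}(Q) \leq m-r+1$, with equality for generic $U$. Using $LP = PR = 0$ together with $L\alpha = LP\mathbf{1} = 0$ and $R^T\beta = (PR)^T\mathbf{1} = 0$, one checks that the left nullspace of $Q$ is $D_\alpha^{-1}(\ker L \cap \alpha^\perp)$, of dimension $r-1$, and that its right nullspace has dimension $n-m+r-1$; these match the dimensions of the nullspaces of a smooth point of $\mathcal{V}_{m-r+1}$.

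It remains to verify the critical-point condition for $Q$. A direct computation gives $U \star Q^{\star(-1)} = u_{++}\, D_\alpha^{-1} P D_\beta^{-1}$, so I must place $N := u_{++}(D_\alpha^{-1}P D_\beta^{-1} - \mathbf{1}\mathbf{1}^T)$ in $T_Q^\perp$. The $j$-th column of the bracketed matrix is $v_j := \beta_j^{-1} D_\alpha^{-1} Pe_j - \mathbf{1}$, for which $\alpha^T v_j = \beta_j^{-1} p_{+j} - 1 = 0$ and $L(D_\alpha v_j) = \beta_j^{-1} LPe_j - L\alpha = 0$, so $v_j$ lies in the left nullspace of $Q$. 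The symmetric check on rows, which uses $R^T\beta = 0$ and $\mathbf{1}^T\alpha = 1$, places the rows in the right nullspace. Hence $N \in T_Q^\perp$, and $Q$ is critical on $\mathcal{V}_{m-r+1}$. The step with real content is the marginal calculation: the identities $\alpha = P\mathbf{1}$ and $\beta = P^T\mathbf{1}$ are precisely what aligns the diagonal conjugation $D_\alpha^{-1}PD_\beta^{-1}$ with the null-space structure of $Q$ inherited from $L$ and $R$, and without them the final orthogonality would fail; everything else is bookkeeping with $LP = PR = 0$.
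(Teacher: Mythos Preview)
The paper does not prove this statement itself; it records it as a theorem and cites Draisma--Rodriguez \cite{DR} for the proof. Your argument is the natural direct verification via the explicit involution $\Phi(P)=\Omega_U\star P^{\star(-1)}$, and it is essentially correct. The crucial steps---the marginal identities $p_{i+}=u_{i+}/u_{++}$, the structural decomposition $Q=\alpha\beta^T+u_{++}^{-1}D_\alpha L^T X R^T D_\beta$, the computation $U\star Q^{\star(-1)}=u_{++}D_\alpha^{-1}PD_\beta^{-1}$, and the verification that each column $v_j$ satisfies $\alpha^Tv_j=0$ and $LD_\alpha v_j=0$---are all right, and they indeed place $N$ in $T_Q^\perp$.

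Two small points are worth tightening. First, your description of the left nullspace of $Q$ as $D_\alpha^{-1}(\ker L\cap\alpha^\perp)$ is not quite the set you actually use: the conditions you verify for $v_j$ are $\alpha^Tv_j=0$ and $D_\alpha v_j\in\ker L$, which describe $\alpha^\perp\cap D_\alpha^{-1}(\ker L)$, a different subspace in general. This is only a labeling slip; your column-by-column check is done with the correct conditions, so the conclusion stands. Second, the assertion that $\operatorname{rank}(Q)=m-r+1$ exactly (so that $Q$ is a smooth point and the tangent-space formulation applies) is where genericity of $U$ genuinely enters. You flag this, and since the theorem concerns the finitely many critical points that exist only for generic $U$, this is adequate; but it would strengthen the write-up to note explicitly that the $(r-1)$-dimensional containment you exhibit for the left nullspace, together with the rank bound from the structural formula, forces equality once one knows (for generic $U$) that $\alpha\beta^T$ does not lie in the column/row span of $D_\alpha L^T X R^T D_\beta$.
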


From the perspective of statistics, this result implies
the following striking statement: maximum likelihood estimation for matrices of rank $r$
is exactly the same problem as minimum likelihood estimation for matrices of corank $r-1$,
and vice versa.
This refined formulation of the duality statement allows us to improve
the speed of MLE by passing to the complementary
problem, where it may be easier to solve the likelihood equations.
We saw a first instance of this in Section \ref{sec:NAG}
when we discussed the
last two columns in Table~\ref{tab:PreprocessTime}:
the two methods give the same set of $191$ solutions but
the running times are complementary.
 
\begin{remark} \rm
Equation (\ref{eq:strongduality})
is trivially satisfied for $r = 1$, where
the ML degree is  $s = 1$. Here, $P_1$ is the rank one matrix in
(\ref{eq:rankone}),
and $Q _1  = \frac{1}{u_{++}} U$. Clearly, we have $P_1 \star Q_1 = \Omega_U$. \qed
\end{remark}

We illustrate Theorem~\ref{conj:Constant} for a specific case
that has already appeared in the literature
\cite{FHRZ,ASCB,ZJG}. The first assertion in the next theorem
resolves \cite[Conjecture~11]{ZJG} affirmatively.
In their conjecture, Zhu {\em et al.}~\cite{ZJG}
had identified the matrix $P(a,b)$ below,
and they had asserted that it is the global maximum of
the likehood function for the data matrix $U(a,b)$.
Note that, for $a = 4$ and $b= 2$, this is
the matrix for DiaNA's data in \cite[Example 1.16]{ASCB}.

\begin{theorem} \label{thm:DiaNA}
Let $m=n=4$, $a>b>0$, and consider the following matrices:
$$U(a,b)\, =\, \left[\begin{array}{cccc}
a & b & b & b\\
b & a & b & b\\
b & b & a & b\\
b & b & b & a
\end{array}\right] \,\,\, \hbox{~~and~~} \quad
P(a,b) \,=\, \frac{1}{8(a+3b)}\left[\begin{array}{cccc}
a+b & a+b & 2b & 2b\\
a+b & a+b & 2b & 2b\\
2b & 2b & a+b & a+b\\
2b & 2b & a+b & a+b
\end{array}\right].$$
The distribution $P(a,b)$ maximizes the likelihood function for
the data matrix $U(a,b)$ on
$\mathcal{M}_2$.
\end{theorem}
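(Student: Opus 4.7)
The plan is to combine an explicit analytic check that $P(a,b)$ is a critical point with certified numerical computation to rule out any competing positive maximum on $\mathcal M_2$.

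First, I would verify directly that $P(a,b)$ is a critical point of $\ell_{U(a,b)}$ on $\mathcal V_2$ for every $a>b>0$. The matrix $P(a,b)$ has rank $2$, with row space and column space both equal to $\operatorname{span}\{e_1+e_2,\,e_3+e_4\}$. By the geometric formulation (\ref{eq:formulation1}), criticality is equivalent to the gradient matrix whose $(i,j)$-entry is $u_{ij}/p_{ij}-u_{++}$ lying in $T_{P(a,b)}^\perp$, which consists of matrices $M$ satisfying $M(e_1+e_2)=M(e_3+e_4)=0$ and symmetrically on the left. A direct substitution shows this gradient equals $\frac{4(a-b)(a+3b)}{a+b}\cdot \operatorname{diag}(J,J)$ with $J=\left(\begin{smallmatrix} 1 & -1 \\ -1 & 1 \end{smallmatrix}\right)$, which manifestly annihilates both vectors on each side. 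A routine Hessian computation, projected onto $T_{P(a,b)}\mathcal V_2$ via a rank factorization, would then confirm that $P(a,b)$ is a local maximum.

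Next, using Theorem \ref{prop:LocalKernel} together with the numerical pipeline of Section \ref{sec:NAG}, I would compute all $191$ complex critical points for the specific instance $(a_0,b_0)=(4,2)$. Running \texttt{alphaCertified} on each numerical approximation certifies convergence to a distinct isolated critical point, separates real from nonreal solutions, and produces rigorous bounds on the log-likelihood values sharp enough to confirm that $P(4,2)$ attains the strict maximum among positive real critical points. To extend from $(a_0,b_0)$ to all $a>b>0$, I would exploit that the MLE depends only on the ratio $t=b/a\in(0,1)$, so the effective parameter space is a connected interval. A parameter homotopy in $t$ would track all $191$ critical points continuously; by the first step, the branch through $P(a,b)$ remains a critical point and local maximum throughout, so global optimality can fail only at a parameter where another positive critical point crosses $P(a,b)$'s log-likelihood value. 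This gap is to be monitored with certified arithmetic along the entire path.

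The main obstacle will be the certified parameter sweep: critical points can enter or leave the positive real region as $t$ varies, and tracking the $191$ branches reliably over the full interval demands adaptive precision and careful path management. Exploiting the symmetry group of $U(a,b)$ (the wreath product of $S_2$ with $S_2$, acting by block-respecting permutations), together with the duality of Theorem \ref{conj:Constant} to transfer information between rank $2$ and rank $3$, should organize the critical points into only a few orbits and make the certified sweep tractable.
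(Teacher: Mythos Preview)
Your strategy is sound and close in spirit to the paper's, but you miss the key simplification that makes the paper's argument work cleanly. After computing the $191$ critical points at a single parameter value and certifying with {\tt alphaCertified} that exactly $25$ are real and positive, the paper does \emph{not} perform a certified sweep of the log-likelihood gap. Instead, it recovers explicit closed-form \emph{rational} expressions in $a,b$ for all $25$ positive critical points, which fall into four $S_4$-orbits of sizes $6,12,4,3$ with representatives $X_1,X_2,X_3,X_4=P(a,b)$. The inequality $\log\ell_U(X_i)<\log\ell_U(X_{i+1})$ is then proved by elementary calculus for all $a>b>0$. The only certified parameter sweep the paper needs is to check that the minimum pairwise distance among all $191$ complex solutions stays positive on the interval, so that no complex-conjugate pair can collide and spawn a new real solution; this is a simpler ``nonvanishing'' check than your proposed comparison of likelihood values along all branches.

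Two smaller points. First, the symmetry group of $U(a,b)=(a-b)I+b\,\mathbf{1}\mathbf{1}^T$ under simultaneous row and column permutations is all of $S_4$, not $S_2\wr S_2$; there is no block structure in $U$ (the block structure belongs to $P(a,b)$, not to the data). The orbit sizes $6,12,4,3$ reflect this full $S_4$ action. Second, the duality with rank $3$ is not needed for the proof; the paper mentions the rank-$3$ computation only as an illustration of Theorem~\ref{conj:Constant} after the proof is complete. Your explicit verification that $P(a,b)$ is a critical point via the gradient $\tfrac{4(a-b)(a+3b)}{a+b}\operatorname{diag}(J,J)\in T_{P}^\perp$ is correct and is a nice addition that the paper does not spell out.
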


\begin{proof}
This statement is invariant under scaling the vector $(a,b)$. 
We
normalize by taking $4a + 12b = 16$.
Then  $b = (4-a)/3$ and $a$ ranges in the open interval
defined by $1 < a < 4$. For each such $a$, the likelihood function
$\ell_{U(a,b)}$ has exactly $25$ positive critical points in the rank $2$ model
$\mathcal M_2$, with the maximum value occurring at~$P(a,b)$.
This statement was shown using the following method
and its illustration in Figure \ref{fig:Plot}.

\begin{figure}[ht!]
  \centering
  \includegraphics[scale=0.4]{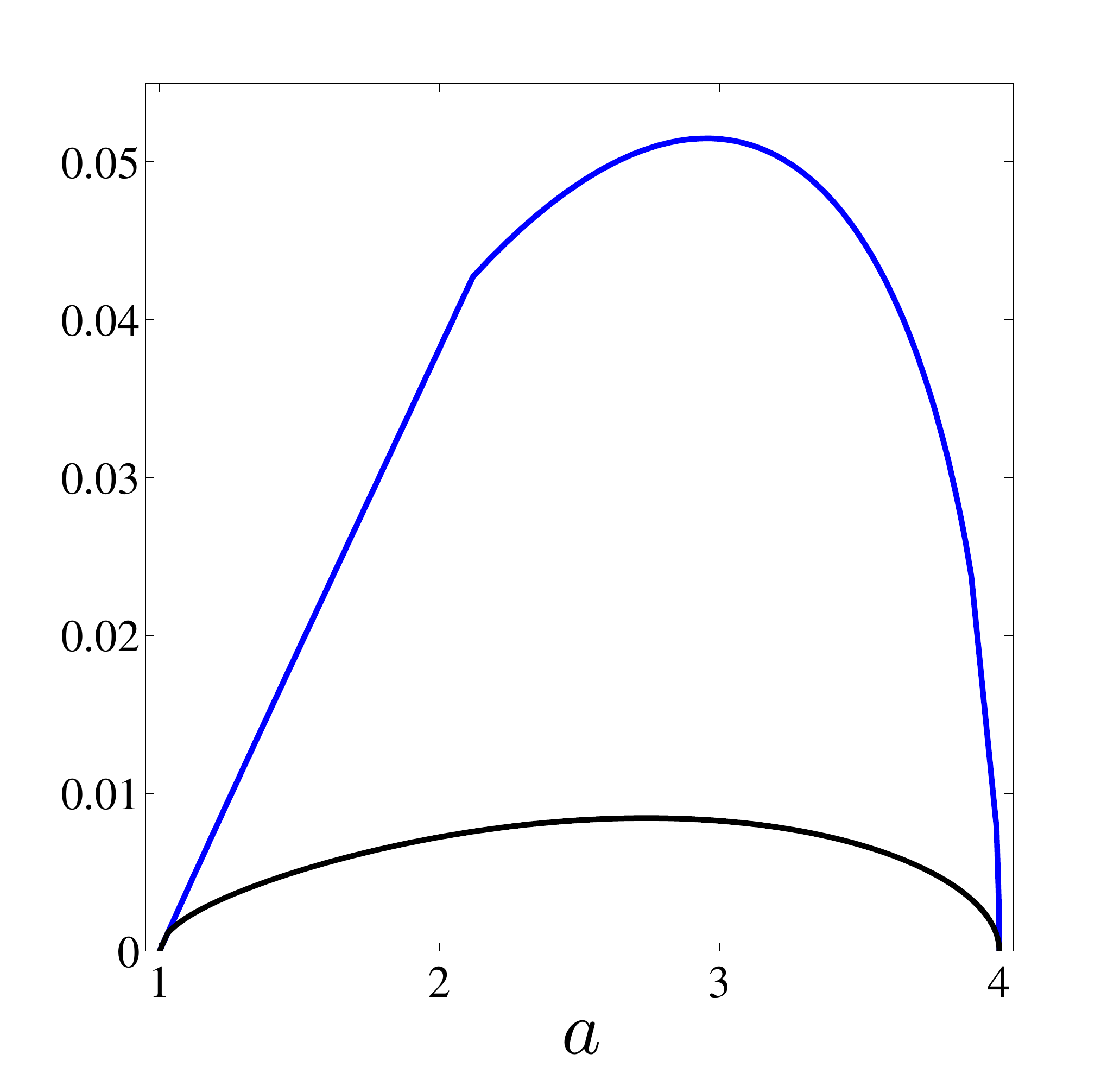}\\
  \caption{Minimum pairwise distance and lower bound \eqref{eq:lowerbound} as a function of $a$}\label{fig:Plot}
\end{figure}

First, we selected $a = 2$ and computed the $191$ critical points
using {\tt Bertini}.  From these, {\tt alphaCertified}
proved that exactly $25$ are real and, using the computed
error bounds, it verified that all lie in $\Delta_{15}$.
We then expressed these real solutions as rational functions in $a$ and $b$
to show that all $25$ real solutions remain positive for all $a > b > 0$.
The critical points fall into four symmetry classes of size $6$, $12$, $4$, and $3$.
Representatives of these classes are
$$
\begin{small}
\begin{array}{ll}
X_1\, =\, \dfrac{1}{16} \left[\begin{array}{cccc} 1 & 1 & 1 & 1 \\ 1 & 1 & 1 & 1 \\ 1 & 1 & \frac{2a}{a+b} & \frac{2b}{a+b} \\ 1 & 1 & \frac{2b}{a+b} & \frac{2a}{a+b} \end{array}\right], \qquad
X_2 \,=\, \dfrac{1}{32(a+2b)}
\left[\begin{array}{cccc} 2a+4b & 2a+4b & 2a+4b & 2a+4b
 \\ 2a+4b & 6a& 6b & 6b \\ 2a+4b & 6b& 3a+3b & 3a+3b\\
 2a+4b & 6b & 3a+3b & 3a+3b\end{array}\right], \\ \\
X_3 \,=\, \dfrac{1}{12(a+3b)} \left[\begin{array}{cccc} 3a & 3b & 3b & 3b \\ 3b & a+2b & a+2b & a+2b \\ 3b & a+2b & a+2b & a+2b \\ 3b & a+2b & a+2b & a+2b \end{array}\right],
\qquad \hbox{and} \quad
X_4 \,=\, P(a,b). \end{array}
\end{small}
$$
Using calculus, one can prove that
$\log\ell_U(X_i) < \log\ell_U(X_{i+1})$ for $i = 1,2,3$.

All that remains is to show that the $191$ solutions remain distinct
on $1 < a < 4$ (with some coalesce at the boundary).
The function mapping $a$ to the minimum
of the pairwise distances between the critical points is a
piecewise smooth function.  It is depicted in Figure~\ref{fig:Plot}.
By tracking the homotopy paths as $a$ changes from $2$ to $1$ and from $2$ to $4$,
we are able to determine that this function is nowhere zero on the open interval $(1,4)$.
Additionally, by analyzing the solutions using \cite{BHMPS}, a lower bound on this
minimum pairwise distance function~is
\begin{equation}\label{eq:lowerbound}
\min\left\{\begin{array}{ll} \frac{(a-1)\sqrt{a^2+17}}{12(a+8)}, & \frac{a+2-\sqrt{(a-1)(a-4)}}{48} - \frac{3(a^2-12a-16)+\sqrt{6(a-1)(a-4)(a^2-16a+96)}}{16(a+8)(a-10)} \end{array}\right\}
\end{equation}
which is also depicted in Figure~\ref{fig:Plot}.
The first term of this minimum arises from $X_2$ and a member of the $X_3$ family
which is equal to the minimum pairwise distances for values of $a$ near $1$.
The second term arises from comparing the $(1,1)$ entries of critical points.
In short, all of the solutions remain distinct on $1 < a < 4$
and this establishes \cite[Conjecture~11]{ZJG}.
\end{proof}

We checked the duality statement in
Theorem~\ref{conj:Constant} by performing
the same computation for $m=n=4$ and $r = 3$.
We followed the $191$ paths in the deformation from a
general $U_0$ to a general $U(a,b)$.  Using {\tt Bertini}, we found that $12$
endpoints had rank $2$ while the other $179$ had the expected rank of $3$.
Moving the other $179$ solutions to $a = 2$ produced $179$ distinct
complex solutions that remain distinct and retain rank $3$ on $(1,4)$.
Using the same certification process as above,
precisely $25$ are positive.
These critical points of $\mathcal M_3$
form four symmetry classes having the same sizes
$6,12,4$, and $3$ as above, with representatives:
$$
\begin{small}
\begin{array}{ll}
Y_1 = \frac{1}{8(a+3b)} \left[\begin{array}{cccc} 2a & 2b & 2b & 2b \\ 2b & 2a & 2b & 2b \\ 2b & 2b & a+b & a+b \\ 2b & 2b & a+b & a+b \end{array}\right],  \qquad
Y_2 = \frac{1}{12(a+3b)} \left[\begin{array}{cccc} 3a & 3b & 3b & 3b \\ 3b & a+2b & a+2b & a+2b \\ 3b & a+2b & \frac{2a(a+2b)}{a+b} & \frac{2b(a+2b)}{a+b} \\ 3b & a+2b & \frac{2b(a+2b)}{a+b} & \frac{2a(a+2b)}{a+b} \end{array}\right], \\ \\
Y_3 = \frac{1}{16(a+2b)} \left[\begin{array}{cccc} a+2b & a+2b &\! a+2b & \! a+2b \\ a+2b & 3a & 3b & 3b \\ a+2b & 3b & 3a & 3b \\ a+2b & 3b & 3b & 3a \end{array}\right] \! , \,\,\,
Y_4 = \frac{1}{16(a+b)} \left[\begin{array}{cccc} 2a & 2b & a+b & a+b \\ 2b & 2a & a+b & a+b \\ a+b & a+b & 2a & 2b \\ a+b & a+b & 2b & 2a\end{array}\right]. \\
\end{array}
\end{small}
$$
The matrices are now sorted by decreasing value of $\ell_{U(a,b)}$, so the
first matrix $Y_1$ is the MLE.
Our real positive critical points satisfy the desired duality relation. Namely, we have
$$  X_1 \star Y_1 \,= \,
 X_2 \star Y_2 \,= \,     X_3 \star Y_3 \,= \,     X_4 \star Y_4 \,\,= \,  \,
 \frac{1}{64(a{+}3b)} U(a,b) \,=: \, \Omega_U . $$
 We verified the same for the complex solutions.

When Theorem \ref{conj:Constant} was still a conjecture, we
verified it for randomly selected data matrices
with i.i.d.~entries sampled from the uniform distribution on $[0,1]$.
After generating a random matrix, we verified
equation (\ref{eq:strongduality}) using
the critical points computed by homotopy continuation.
For $m = n = 3$ and $r = 2$,
we verified (\ref{eq:strongduality}) for 50000 instances.
Additionally, for $m = n = 4$ and $r = 2$,
we verified (\ref{eq:strongduality}) for 10000~instances.
We also did this for a handful of $4 \times 5$ instances (such
as Example~\ref{ex:4x5numeric}) and $5 \times 5$ instances (such as
 Example~\ref{ex:5x5numeric}).
The user can find  {\tt Macaulay\,2} code, which uses the emerging
 {\tt Bertini.m2} package, to perform more numerical experiments at
{\tt math.berkeley.edu/\text{\textasciitilde}jrodrig/code/rankConstraints}.

Theorem \ref{conj:Constant} and its analogue for symmetric matrices
is particularly interesting in the special case when $m=n = 2r-1$.
Here we have
 an involution on the set of critical points
of $\ell_U$ on $\mathcal{V}_r$ which has the following property.
If $P_1, P_2, \ldots, P_s$ are the positive critical points in the model
$\mathcal{M}_r$, ordered by increasing value of the log-likelihood function, then
$$ \ell_U(P_1) + \ell_U(P_s) \, = \,
 \ell_U(P_2) + \ell_U(P_{s-1}) \, = \,\,\cdots \,\, = \,
\ell_U(P_{\lceil{s/2} \rceil})  +
\ell_U(P_{\lfloor{s/2} \rfloor}).
 $$
The identity (\ref{eq:strongduality}) implies
that Galois group which permutes the
set of critical points is considerably smaller than the full symmetric group
on these points.
We shall demonstrate this for $n=3$.
 What follows will explain the solutions in radicals
seen in Example~\ref{ex:3x3numeric}.

Let  $\mathbb{Q}(U)$ denote the field of rational functions
in entries of an indeterminate data matrix $U$, and let
$K$ denote the algebraic extension of $\mathbb{Q}(U)$
that is defined by adjoining all solutions of the likelihood equations.
Thus the degree of the extension $K/\mathbb{Q}(U)$ is the ML degree.
We are interested in the Galois group $G = {\rm Gal}(K,\mathbb{Q}(U))$
of this algebraic extension. This Galois group is a subgroup of
the full symmetric group $S_M$ where $M$ is the ML degree.

The following result was found by explicit computations using
{\tt maple} and {\tt Sage} \cite{sage}.

\begin{proposition}
\label{prop:galois}
The Galois group for MLE on $3 \times 3$-matrices (\ref{eq:Pmatrix}) of rank $2$
is a subgroup of order $1920$ in $S_{10}$.
As an abstract group, it is the semidirect product of $S_5$ and $(\mathbb{Z}_2)^4$.
The Galois group for MLE on symmetric $3 \times 3$-matrices
(\ref{eq:Pmatrixsym}) of rank $2$ is a subgroup of order $24$ in $S_5$.
As an abstract group, it is the symmetric group $S_4$. So, in the latter case, the six
critical points of the likelihood function can be written in radicals
in $\,u_{11}, u_{12}, u_{13}, u_{22}, u_{23}, u_{33}$.
\end{proposition}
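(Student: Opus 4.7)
My plan is to combine the duality structure from Theorem \ref{conj:Constant} with explicit Galois-group computations in a computer algebra system, organized around a concrete primitive element of the splitting field.

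I would first fix a primitive element $\theta = \sum_{i,j} c_{ij}\, p_{ij}$ with generic integer weights $c_{ij}$, and compute its minimal polynomial $f(x) \in \mathbb{Q}(U)[x]$ of degree equal to the ML degree, namely $10$ in the non-symmetric case and $6$ in the symmetric case. The Galois group $G = \mathrm{Gal}(f/\mathbb{Q}(U))$ is then a transitive subgroup of the corresponding symmetric group on the critical locus; transitivity follows from irreducibility of $f$, which can be certified via the numerical monodromy of the homotopy built in Section~\ref{sec:NAG} (a loop in $U$-space that transports one critical point to another).

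Next I would invoke Theorem \ref{conj:Constant} with $m = n = 3$ and $r = 2$, noting that the dual rank $m - r + 1 = 2$ coincides with the original. The involution $\sigma : P \mapsto \Omega_U \star P^{\star(-1)}$ is defined over $\mathbb{Q}(U)$ and commutes with $G$, so $G$ lies in the centralizer of this fixed-point-free involution on the critical locus. This centralizer is the wreath product $S_2 \wr S_5$ of order $3840$ in the non-symmetric case, and $S_2 \wr S_3$ of order $48$ in the symmetric case. To close the factor-of-two gap with the target orders $1920$ and $24$, I would identify a Galois-invariant parity: for each duality pair $\{P, \sigma(P)\}$ the quantity $h(P) - h(\sigma(P))$ (for a suitable linear functional $h$ of the coordinates) determines a sign, and I expect the product of these signs across the five (respectively three) pairs to lie already in $\mathbb{Q}(U)$, not merely to have its square there. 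This traps $G$ inside the kernel of the product-of-signs character of the wreath product, which is $(\mathbb{Z}_2)^4 \rtimes S_5$ of order $1920$ in the first case and $(\mathbb{Z}_2)^2 \rtimes S_3 \cong S_4$ of order $24$ in the second, matching the groups in the statement.

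Equality of $G$ with this candidate would then be verified by specializing $U$ to a rational matrix for which $f$ remains irreducible and computing $\mathrm{Gal}(f/\mathbb{Q})$ via the resolvent method in Sage, with a Hilbert-irreducibility argument transferring the conclusion back from $\mathbb{Q}$ to $\mathbb{Q}(U)$. Once $G \cong S_4$ is established in the symmetric case, classical solvability of $S_4$ lets \texttt{RadiRoot} produce the explicit radical expressions of Example~\ref{ex:3x3numeric}; in the non-symmetric case $S_5$ appears as a quotient of $(\mathbb{Z}_2)^4 \rtimes S_5$, so $G$ is unsolvable and no radical formula for the critical points can exist. The main obstacle is not the final resolvent computation, which is routine, but rather pinning down the precise index-two invariant responsible for the factor of two beyond the wreath product: exhibiting an element of $\mathbb{Q}(U)$ that changes sign under odd wreath elements is the step most likely to require delicate experimentation, and it may be cleaner to establish this restriction computationally (by checking that the associated quadratic resolvent splits over $\mathbb{Q}(U)$) than to write the invariant down in closed form.
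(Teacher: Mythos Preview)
The paper offers no proof beyond the sentence ``found by explicit computations using {\tt maple} and {\tt Sage}'', so your proposal is considerably more structured than what appears there. Your use of the duality involution from Theorem~\ref{conj:Constant} to trap $G$ inside the centralizer $S_2 \wr S_5$ (respectively $S_2 \wr S_3$) is a genuine explanation of \emph{why} these particular groups arise, something the paper does not attempt; combined with a specialization argument for the lower bound, this would give a conceptually satisfying account of the orders $1920$ and $24$ and of the abstract isomorphism types. The paper's approach, by contrast, treats the result as a black-box output of symbolic software, and the duality structure is only mentioned afterward as motivation rather than used as an ingredient.

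The one soft spot in your plan is exactly where you flag it: the index-two descent from the full wreath product to the kernel of the product-of-flips character. Your proposed invariant $\prod_i \bigl(h(P_i) - h(\sigma(P_i))\bigr)$ does transform by $(-1)^{\sum \varepsilon_i}$ under a wreath element $(\varepsilon; \pi)$, so its $\mathbb{Q}(U)$-rationality would indeed force $G$ into the correct index-two subgroup $(\mathbb{Z}_2)^{k-1} \rtimes S_k$; but nothing you have written forces this product to be rational rather than merely to have rational square, and there are two other index-two subgroups of $S_2 \wr S_k$ (kernels of ${\rm sgn}(\pi)$ and of ${\rm sgn}(\pi)\cdot(-1)^{\sum\varepsilon_i}$) that would need to be excluded. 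In practice you will have to settle this computationally via the quadratic resolvent you mention, at which point your argument collapses back to the paper's: the duality gives the containment in the wreath product for free, but the last factor of two and the matching lower bound still rest on machine computation.
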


We close this section with an important observation
 that is implied by the various polynomial formulations of our problem,
but which had not been explicitly stated
in Section~\ref{sec:Equations}.

\begin{remark}
Every complex critical point $ P$ of the likelihood function $\ell_U$ on $\mathcal{V}_r$ satisfies
$$ p_{i+} = \frac{u_{i+}}{u_{++}}  \,\,\,\,\hbox{for}\,\,\, i = 1,\ldots,m
\qquad \hbox{and} \qquad
p_{+j} = \frac{u_{+j}}{u_{++}} \, \,\,\,\hbox{for}\,\,\, j = 1,\ldots,n . $$
\end{remark}

The analogous identities hold for any statistical model
that is {\em toric} in the sense of \cite{ASCB}.
Namely, the critical points of the likelihood function on any
secant variety of a toric variety have the sufficient statistics
of the given data in the toric model. This fact seems relevant for the topological
underpinnings of ML duality. One is tempted to speculate that
some version of Theorems \ref{conj:duality}, \ref{conj:dualitysym}, and
\ref{conj:Constant}  might be true for other classes of toric models.

\section{Rank versus non-negative rank}\label{sec:Stats}

In the previous sections, we developed accurate methods for finding the
global maximum of a likelihood function $\ell_U$
over non-negative matrices $P$ of rank $r$ whose entries sum to~$1$.
Unfortunately, this is not quite the problem most practitioners
and users of statistics would actually be interested in.
Rather than restricting the rank  of a probability table (\ref{eq:Pmatrix}),
it is the {\em non-negative rank} that is more relevant for applications.
In this section we discuss this.

Let ${\rm Mix}_r$ denote the subset of $\Delta_{mn-1}$
that comprises all the mixtures of $r$ independent distributions.
In statistics, this is the archetype of a latent variable model,
or hidden variable model.
Mathematically, we can define the {\em mixture model}
$\,{\rm Mix}_r$ as the set of all matrices
\begin{equation}
\label{eq:mixequation}
 P \,\,=\,\, A \cdot \Lambda \cdot B,
 \end{equation}
where $A$ is a non-negative $m \times r$-matrix whose
columns sum to $1$,
$\Lambda$ is an $r \times r$ diagonal matrix
whose diagonal entries are non-negative and sum to $1$,
and $B$ is a non-negative $r \times n$-matrix whose
rows sum to $1$. The {\em rank-constrained model}
$\mathcal{M}_r = \mathcal{V}_r \cap \Delta_{mn-1}$ we discussed above is an algebraic relaxation
of the mixture model ${\rm Mix}_r$.
This can be made precise~as~follows:

\begin{proposition}
\label{prop:mixture}
The rank-constrained model $\mathcal{M}_r$ is the Zariski closure
of the mixture model ${\rm Mix}_r$ inside the simplex $\Delta_{mn-1}$.
If $\,r \leq 2\,$ then $\,{\rm Mix}_r = \mathcal{M}_r$.
If $\,r \geq 3\,$ then $\,{\rm Mix}_r \subsetneq \mathcal{M}_r$.
\end{proposition}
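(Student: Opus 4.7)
The plan is to handle the three assertions of the proposition in sequence. The inclusion $\mathrm{Mix}_r \subseteq \mathcal{M}_r$ is immediate: a nonnegative factorization $P = A \Lambda B$ with stochastic factors forces $\mathrm{rank}(P) \leq r$ and $P \in \Delta_{mn-1}$. To show that $\mathcal{M}_r$ is the Zariski closure of $\mathrm{Mix}_r$ in $\Delta_{mn-1}$, I would prove $\mathrm{Mix}_r$ has nonempty Euclidean interior inside $\mathcal{M}_r$. Fix a strictly positive stochastic triple $(A_0, \Lambda_0, B_0)$ with $P_0 = A_0 \Lambda_0 B_0$ of exact rank $r$. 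The parameterization $\Phi(A, \Lambda, B) = A \Lambda B$ has real parameter space of dimension $r(m+n)-r-1$, and its generic fiber near $(A_0, \Lambda_0, B_0)$ is captured by the stochastic gauge $(A, \Lambda B) \mapsto (AG, G^{-1}\Lambda B)$ with $G \in \mathrm{GL}_r(\R)$ of column sums $1$, of dimension $r(r-1)$. Hence the image of $\Phi$ has real dimension $r(m+n-r) - 1 = \dim_\R \mathcal{M}_r$ near $P_0$, so $\mathrm{Mix}_r$ contains a Euclidean-open subset of $\mathcal{M}_r$. Because $\mathcal{V}_r$ is irreducible and $\mathcal{M}_r = \mathcal{V}_r \cap \Delta_{mn-1}$, any such open subset is Zariski-dense in $\mathcal{M}_r$.

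For $r \leq 2$ I would exhibit explicit factorizations. The case $r = 1$ is trivial after normalizing $P = a b^T$. For $r = 2$, let $P$ have rank exactly $2$. The column span $V = \mathrm{col}(P) \subset \R^m$ is two-dimensional, so the cone $V \cap \R^m_{\geq 0}$ is a pointed two-dimensional convex cone in the plane $V$, hence spanned by exactly two extreme rays with nonnegative generators $a_1, a_2$. Every column of $P$ is a nonnegative combination of $a_1$ and $a_2$, giving $P = [a_1 \mid a_2]\, C$ with $C$ a nonnegative $2 \times n$ matrix. Rescaling the columns of $[a_1 \mid a_2]$ and the rows of $C$ to unit sum, and absorbing the scalars into a $2 \times 2$ stochastic diagonal $\Lambda$, produces the desired factorization $P = A \Lambda B \in \mathrm{Mix}_2$.

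For $r \geq 3$ with $r < \min(m,n)$ I would exhibit a single counterexample and propagate it. Take $m = n = 4$, $r = 3$, and set
\begin{equation*}
P_0 \,=\, \tfrac{1}{8}\begin{pmatrix} 1 & 1 & 0 & 0 \\ 1 & 0 & 1 & 0 \\ 0 & 1 & 0 & 1 \\ 0 & 0 & 1 & 1 \end{pmatrix}.
\end{equation*}
Its columns satisfy $c_1 - c_2 - c_3 + c_4 = 0$ while any three are independent, so $\mathrm{rank}(P_0) = 3$ and $P_0 \in \mathcal{M}_3$. In any nonnegative decomposition $P_0 = \sum_k x_k y_k^T$, each rank-one summand has support $\mathrm{supp}(x_k) \times \mathrm{supp}(y_k)$, a combinatorial rectangle inside $\mathrm{supp}(P_0)$. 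A direct check shows no two rows of $P_0$ share two columns of common support, so every such rectangle contains at most two entries; covering the eight entries of $\mathrm{supp}(P_0)$ therefore requires at least four summands, giving nonnegative rank at least $4$ and $P_0 \notin \mathrm{Mix}_3$. For larger $r$ (still with $r < \min(m,n)$), I would take the block-diagonal matrix $P_0 \oplus I_{r-3}$ padded with zero rows and columns to reach size $m \times n$; additivity of nonnegative rank across blocks with disjoint row and column supports yields rank $r$ and nonnegative rank $r+1$.

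The main obstacle is the fiber-dimension count in the Zariski closure step: one must verify that the stochastic constraints cut the naive $r^2$-dimensional $\mathrm{GL}_r$-gauge of rank-$r$ factorizations down to exactly an $r(r-1)$-dimensional family that remains fully active near a generic positive factorization. If this turns out to be delicate, I would fall back on the softer observation that sufficiently small positive perturbations of $(A_0, \Lambda_0, B_0)$ sweep out a semi-algebraic subset of $\mathcal{M}_r$ of full real dimension, and invoke irreducibility of $\mathcal{V}_r$ to conclude Zariski density without pinning down the gauge dimension exactly.
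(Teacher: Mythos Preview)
Your argument is correct. The paper does not give a proof at all: it simply cites Example~4.1.2, Example~4.1.4, and Proposition~4.1.6 of Drton--Sturmfels--Sullivant, \emph{Lectures on Algebraic Statistics}, noting that the matrix case is a specialization of the general statement there for secant varieties of Segre varieties. So your self-contained treatment goes well beyond what the paper does.

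Your arguments are in fact the classical ones that underlie the cited reference. The extreme-ray argument for $r=2$ (a two-dimensional subspace meets the nonnegative orthant in a pointed planar cone, hence has exactly two generators) is the standard proof that rank~$2$ equals nonnegative rank~$2$. Your $4\times 4$ matrix is the well-known Cohen--Rothblum example, and the rectangle-covering lower bound on nonnegative rank is the usual combinatorial argument; the block-diagonal propagation to larger $r$ via additivity of nonnegative rank over blocks with disjoint supports is also standard and correct. For the Zariski closure, your identification of the gauge as an $r(r-1)$-dimensional family is slightly loose (the diagonal constraint on $\Lambda$ makes the description $(A,\Lambda B)\mapsto(AG,G^{-1}\Lambda B)$ not literally act on the parameter space), but you flag this yourself, and your fallback---that $\Phi$ is a submersion at a generic strictly positive point because the rank-$r$ matrix variety has the expected dimension and is dominated by the bilinear parameterization---is clean and suffices. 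What your route buys over the bare citation is an explicit, verifiable argument; what the citation buys is brevity and the observation that the result holds more generally for tensors of arbitrary format.
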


\begin{proof}
See Example 4.1.2, Example 4.1.4 and Proposition 4.1.6 in \cite{LiAS}.
That book refers to secant varieties of Segre varieties, tensors of any format,
 and joint distributions of any number of random variables.
 Here we only need the case of matrices and two random variables.
\end{proof}

Our model  $\mathcal{M}_r$ is the set of all distributions $P$
of rank at most $r$, while
${\rm Mix}_r$ is the set of all distributions $P$ of non-negative rank at most $r$.
Having non-negative rank $\leq r$ means that
$P = A' \cdot B'$ for some non-negative matrices
where $A'$ has $r$ columns and $B'$ has $r$ rows.
Any such factorization can be transformed into the particular
form (\ref{eq:mixequation}) which identifies the statistical parameters.
For further information on these two models see \cite{FHRZ, MSS, ASCB}.

\smallskip

Understanding the inclusion of ${\rm Mix}_r$ inside $\mathcal{M}_r$
becomes crucial when comparing different methodologies for
maximum likelihood estimation. We used {\tt Bertini} to compute
all critical points of the likelihood function $\ell_U$
on $\mathcal{M}_r$, with the aim of identifying the
global maximum $\widehat P$ of $\ell_U$ over $\mathcal{M}_r$.
This assumes that $\widehat P$ is strictly positive. This is
usually the case when $U$ is strictly positive.
The standard method used by statisticians is to run the
{\em EM algorithm} in the space of model parameters $(A,\Lambda, B)$.
This results in a local maximum $(\widehat A, \widehat \Lambda, \widehat B)$
of the likelihood function expressed in terms of the parameters.
The fact that $\mathcal{M}_r$ is the Zariski closure
of the mixture model ${\rm Mix}_r$ in the simplex $\Delta_{mn-1}$ has the following~consequence:

\begin{corollary}
\label{cor:nonneg}
Let $\widehat P_1, \ldots, \widehat P_s$ be
the local maxima in $\mathcal{M}_r$ of the likelihood function $\ell_U$.
If a matrix $\widehat P_i$ has non-negative rank at most $r$
then $\widehat P_i$ lies in ${\rm Mix}_r$ and
matching parameters
$(\widehat A_i, \widehat \Lambda_i, \widehat B_i)$ can found by solving
(\ref{eq:mixequation}).
If all matrices $\widehat P_i$ have non-negative rank strictly larger than $r$
then $\ell_U$ attains its maximum over ${\rm Mix}_r$
on the topological boundary $\partial {\rm Mix}_r$.
\end{corollary}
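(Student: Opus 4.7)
The corollary has two assertions, of rather different character. For the first assertion, the plan is essentially to rewrite a non-negative rank factorization in the statistical normal form (\ref{eq:mixequation}). Suppose $\widehat P_i = A' B'$ with $A'\in\mathbb{R}^{m\times r}_{\geq 0}$ and $B'\in\mathbb{R}^{r\times n}_{\geq 0}$. For each column index $k$, let $a_k$ be the $k$-th column sum of $A'$ and $b_k$ the $k$-th row sum of the resulting row-scaled $B'$; rescale the $k$-th column of $A'$ and the $k$-th row of $B'$ so that they become probability vectors, and set $\lambda_k = a_k b_k$. (Columns/rows whose sum vanishes are zero and can be omitted.) Because $\widehat P_i$ is itself a probability distribution, the $\lambda_k$ automatically sum to one, so $(\widehat A_i,\widehat\Lambda_i,\widehat B_i)$ is a valid parameter vector realizing $\widehat P_i\in\mathrm{Mix}_r$.

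For the second assertion, the strategy is a compactness/interior argument by contradiction. First I would note that $\mathrm{Mix}_r$ is compact: it is the continuous (in fact polynomial) image under $(A,\Lambda,B)\mapsto A\Lambda B$ of a product of probability simplices. Hence $\ell_U$ attains its maximum on $\mathrm{Mix}_r$ at some point $Q$. Now assume for contradiction that $Q$ does not lie in the topological boundary $\partial\mathrm{Mix}_r$, taken relative to the ambient space $\mathcal{M}_r$ (the only meaningful choice, since in $\Delta_{mn-1}$ the set $\mathrm{Mix}_r$ has positive codimension and the statement would be vacuous). Then there is a neighborhood $V$ of $Q$ in $\mathcal{M}_r$ with $V\subseteq \mathrm{Mix}_r$, so $\ell_U(P)\leq \ell_U(Q)$ for every $P\in V$; that is, $Q$ is a local maximum of $\ell_U$ on $\mathcal{M}_r$.

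Hence $Q$ equals one of the $\widehat P_i$. But $Q\in\mathrm{Mix}_r$, so $Q$ has non-negative rank at most $r$ by definition of $\mathrm{Mix}_r$, and this contradicts the standing hypothesis that every $\widehat P_i$ has non-negative rank strictly larger than $r$. Therefore $Q\in\partial\mathrm{Mix}_r$, as required. The only delicate point is the choice of ambient space in defining $\partial\mathrm{Mix}_r$; once this is fixed to be $\mathcal{M}_r$, which is natural in light of Proposition~\ref{prop:mixture} (both sets share the Zariski closure $\mathcal{V}_r$, so $\mathrm{Mix}_r$ has non-empty interior in $\mathcal{M}_r$), the rest of the argument is a routine application of the definition of local maximum.
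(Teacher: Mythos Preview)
Your proof is correct and follows essentially the same route as the paper: normalize a non-negative factorization into the form $A\Lambda B$ for the first assertion, and argue by contrapositive that an interior maximizer of $\ell_U$ on $\mathrm{Mix}_r$ would be a local maximizer on $\mathcal{M}_r$ for the second. You are in fact slightly more careful than the paper in making explicit the compactness of $\mathrm{Mix}_r$ and the ambient space for $\partial\mathrm{Mix}_r$; the only quibble is that your phrase ``the resulting row-scaled $B'$'' is a little ambiguous---with $\lambda_k=a_kb_k$ you clearly intend $b_k$ to be the $k$-th row sum of the original $B'$.
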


\begin{proof}
The second sentence holds because
 every matrix $P \in \Delta_{mn-1}$ of non-negative rank $\leq r$ admits
a factorization of the special form (\ref{eq:mixequation}).
Indeed, if $P = A' \cdot B'$ is any non-negative factorization then
we first scale the rows of $A'$ to get a matrix $A$ with row sums equal to $1$,
and we adjust the second matrix so that $P = A \cdot B''$.
Now let $\Lambda$ be the diagonal matrix whose entries
are the column sums of $B''$ and set
$B =  \Lambda^{-1} B''$. Then $P = A \cdot \Lambda \cdot B$.

For the third sentence, suppose
$\ell_U$ has its maximum over ${\rm Mix}_r$
at a point $\widehat P$ in ${\rm Mix}_r \backslash \partial {\rm Mix}_r$.
Then $\widehat P$ is also a local maximum of $\ell_U$ on
$\mathcal{M}_r$. Thus $\widehat P$ will  be found by solving the critical equations for $\ell_U$ on
$\mathcal{V}_r$. The matrix $\widehat P$ is an element of
$\{\widehat P_1, \ldots , \widehat P_s \}$. Hence, this set
contains a matrix of non-negative rank $\leq r$.
This proves the contrapositive of the assertion.
\end{proof}

We shall now discuss
the exact solution of the MLE problem for
the mixture model ${\rm Mix}_r$.
Let us start with the low rank cases.
The given input is a  data matrix $U$ as in (\ref{eq:Umatrix}).

If $r=1$ then the likelihood function $\ell_U$ has a unique critical point.
Let $\,u_{*+}\,$ be the column vector of row sums of $U$,
and let $\,u_{+*}\,$ be the row vector of column sums of $U$.
Then
\begin{equation}
\label{eq:rankone}
\widehat P \quad  = \quad \frac{1}{(u_{++})^2} \cdot u_{*+} \cdot u_{+*}.
\end{equation}
If $r \geq 2$ then we compute the set
$\{\widehat P_1,\ldots,\widehat P_s \}$ of
all local maxima of the likelihood function
$\ell_U$ on the model $\mathcal{M}_r$. This is done
using the numerical algebraic geometry methods described in Section~\ref{sec:NAG}, by
solving the likelihood equations (\ref{eq:formulation5}) for the determinantal variety $\mathcal{V}_r$.

If $r = 2$ then every matrix $\widehat P_i$ has non-negative rank $\leq 2$.
We therefore select the matrix whose likelihood value
$\ell_U(\widehat P_i)$ is maximal. Then $\widehat P_i$ solves
the MLE problem for ${\rm Mix}_2 = \mathcal{M}_2$.

\begin{example}
\label{ex:em45} \rm
We experimented with the EM Algorithm for $r=2$, as  in \cite[\S 1.3]{ASCB},
on the $4 \times 5$ data matrix $U$  discussed in
Example \ref{ex:4x5numeric}.
We ran $10000$ iterations with
starting points $(A,\Lambda,B)$ sampled from the uniform distribution
on the $15$-dimensional parameter polytope
$$ (\Delta_3 \times \Delta_3) \, \times \,\Delta_1 \,
\times \,(\Delta_4 \times \Delta_4)  . $$
From these $10000$ runs of the EM algorithm we obtained the following
seven local maxima:
$$
\begin{matrix}
\smallskip
 \,\, 2643\,\, \hbox{occurrences:}  & \begin{tiny}
\begin{bmatrix}
       0.001678 & 0.01892 & 0.00001325 & 0.007008 & 0.00000722  \\
         0.01894 & 0.2136 & 0.00006605 & 0.07912 & 0.00008149\\
      0.00007930 & 0.00003964 & 0.5447 & 0.00003964 & 0.00002643\\
        0.007023 & 0.07921 & 0.00002643 & 0.02933 & 0.00003021
\end{bmatrix}
\end{tiny} & {\rm log}(\ell_U) \, = \,  -105973.49 \\
\smallskip
2044 \,\, \hbox{occurrences:}  & \begin{tiny}
\begin{bmatrix}
     0.001332 & 0.00001777 & 0.02627 & 0.00000792   & 0.00000382 \\
       0.00007696 & 0.2274 & 0.00006503 & 0.08423 & 0.00004823\\
       0.02628 & 0.00003913 & 0.5185 & 0.00004103 & 0.00007542\\
       0.00002871 & 0.08432 & 0.00002762 & 0.03123 & 0.00001788
\end{bmatrix}
\end{tiny} & {\rm log}(\ell_U) \, = \,     -106487.35 \\
\smallskip
1897 \,\, \hbox{occurrences:}  & \begin{tiny}
\begin{bmatrix}
     0.002245 & 0.02536 & 0.00001725 & 0.000006332   & 0.000005379 \\
       0.02535 & 0.2863 & 0.00006471 & 0.00004393 & 0.00006072\\
       0.00009818 & 0.00003897 & 0.4495 & 0.09525 & 0.00006537\\
       0.00002773 & 0.00008630 & 0.09530 & 0.02020 & 0.00001388
\end{bmatrix}
\end{tiny} & {\rm  log}(\ell_U) \, = \,    -109697.04  \\
\smallskip
1688 \,\,  \hbox{occurrences:}  & \begin{tiny}
\begin{bmatrix}
       0.001111 & 0.00001327 & 0.02187 & 0.004634 & 0.000005304 \\
      0.00005289 & 0.3117 & 0.00006605 & 0.00003968 & 0.00001322\\
         0.02191 & 0.00003963 & 0.4314 & 0.09144 & 0.0001046\\
        0.004647 & 0.00007931 & 0.09148 & 0.01939 & 0.00002219
\end{bmatrix}
\end{tiny} & {\rm log}(\ell_U) \, = \,    -111172.67   \\
\smallskip
1106 \,\, \hbox{occurrences:}  & \begin{tiny}
\begin{bmatrix}
      0.005321 & 0.00002006 & 0.00001106 & 0.02226 & 0.00002038\\
        0.00005070 & 0.1135 & 0.1983 & 0.00004009 & 0.00001444\\
        0.00008126 & 0.1983 & 0.3465 & 0.00003939 & 0.00002520\\
       0.02227 & 0.00007333 & 0.00002771 & 0.09316 & 0.00008532
\end{bmatrix}
\end{tiny} & {\rm log}(\ell_U) \, = \,    -127069.50  \\
\smallskip
529 \,\, \hbox{occurrences:}  & \begin{tiny}
\begin{bmatrix}
      0.0008641 & 0.009735 & 0.01701 & 0.00001350 & 0.00000289 \\
         0.009756 & 0.1099 & 0.1921 & 0.00003965 & 0.00003259\\
         0.01705 & 0.1921 & 0.3357 & 0.00003959 & 0.00005693\\
      0.00005301 & 0.00007930 & 0.00002642 & 0.1154 & 0.00005294
\end{bmatrix}
\end{tiny} & {\rm log}(\ell_U) \, = \,  -131013.73    \\
93 \,\, \hbox{occurrences:}  & \begin{tiny}
\begin{bmatrix}
     0.02754 & 0.00001320 & 0.00001319 & 0.00001334 & 0.00005311\\
         0.00005280 & 0.09999 & 0.1747 & 0.03704 & 0.00002957\\
         0.00007916 & 0.1747 & 0.3053 & 0.06472 & 0.00005164\\
        0.00005339 & 0.03706 & 0.06476 & 0.01373 & 0.00001102
\end{bmatrix}
\end{tiny} & {\rm log}( \ell_U) \, = \,   -148501.63   \\
\end{matrix}
$$
The first matrix is the global maximum, and it was the output in
$2643$ of our $10000$ runs.
Note that the ordering by objective function value
agrees with the ordering by occurrence.
We know from Example \ref{ex:4x5numeric} that
$\Delta_{19} $ contains $7$ local maxima, and hence our EM experiment found them all.
Each of the $7$ matrices above has both rank and non-negative rank $r=2$.
\qed
\end{example}

If $r \geq 3$ then the situation is more challenging.
To begin with,  we need a method for testing whether
a matrix has non-negative rank $\leq r$.
Recent work by Moitra \cite{Moi}
shows that the computational complexity of this
problem is lower than one might fear at first glance.

So, let us assume for now that this problem has been solved
and we have an algorithm to decide quickly
whether any of the matrices $\widehat P_i$
has non-negative rank $r$. If so, we pick among them
the matrix $\widehat P_i$ of largest $\ell_U$-value.
This matrix is now a candidate for the MLE on ${\rm Mix}_r$.
But it  may not actually be the MLE because the
global maximum of the likelihood function  $\ell_U$
may be attained on the boundary $\partial {\rm Mix}_r$.
Furthermore, it is quite possible that none of the critical points
in $\{\widehat P_1, \ldots , \widehat P_s \}$ lies in ${\rm Mix}_r$.
Then, according to the third sentence of Corollary
\ref{cor:nonneg}, the MLE in the mixture model ${\rm Mix}_r$
necessarily lies in the boundary  $\partial {\rm Mix}_r$.

Our discussion implies that, in order to perform exact
maximum likelihood estimation for the mixture model,
we need to have an exact algebraic description
of $\partial {\rm Mix}_r$. Specifically, we must
determine the polynomial equations that cut out the various irreducible components of
the Zariski closure of $\partial {\rm Mix}_r$ as a subvariety of $\PP^{mn-1}$.
For each of these components, and the various strata where they intersect,
we then need to compute the ML degree.
That list of further ML degrees,
 combined with the value for $\mathcal{V}_r$ in
 Theorem \ref{thm:MLvalues}, describes the true intrinsic
algebraic complexity of the MLE $\widehat P$
as a piecewise algebraic function of the data $U$.

To be even more ambitious, we could ask for
an exact semi-algebraic description of the
set ${\rm Mix}_r$. Namely, what we seek is a
Boolean combination of polynomial inequalities in
the unknowns $p_{ij}$ that characterize ${\rm Mix}_r$ as a subset of
  $\mathcal{V}_r \cap \Delta_{mn-1}$.
Finding such a description is an open problem, even in the
small cases that are covered by Theorem \ref{thm:MLvalues}.
We believe that it might be possible to resolve the problem
for these cases, where $(m,n,r)$ ranges from $(4,4,3)$ to $(5,5,4)$,
using the techniques developed by Mond, Smith, and van Straten in \cite{MSS}.

We  illustrate the proposed approach for the first interesting case $(m,n,r) = (4,4,3)$.
Components of $\partial {\rm Mix}_3$ correspond to different labelings of the
configurations in \cite[Figure 9]{MSS}. Using the translations (seen
in \cite[\S 2]{MSS})  between non-negative  factorizations (\ref{eq:mixequation}) and nested polygons,
one of the  labelings of \cite[Figure 9 (a)]{MSS} corresponds to the factorization
\begin{equation}
\label{eq:P=AB}
\begin{pmatrix}
p_{11} & p_{12} & p_{13}  & p_{14} \\
p_{21} & p_{22} & p_{23}  & p_{24} \\
p_{31} & p_{32} & p_{33}  & p_{34} \\
p_{41} & p_{42} & p_{43}  & p_{44}
\end{pmatrix}
 \quad = \quad
\begin{pmatrix}
0  & a_{12}  & a_{13} \\
0 & a_{22} & a_{23} \\
a_{31} & 0 & a_{33} \\
a_{41} & a_{42}  & 0
\end{pmatrix} \cdot
\begin{pmatrix}
0 & b_{12} & b_{13} & b_{14} \\
b_{21} & 0 & b_{23}  & b_{24} \\
b_{31} & b_{32} & b_{33} & 0
\end{pmatrix}.
\end{equation}
This equation parametrizes an irreducible
divisor in the $14$-dimensional variety
$\mathcal{V}_3 \subset \PP^{15}$.
That divisor is one of the irreducible components
of the algebraic boundary of $\mathcal{M}_3$.
The corresponding prime ideal of height $2$
in $\mathbb{Q}[p_{11}, \ldots,p_{44}]$ is obtained by
eliminating the $17$ unknowns $a_{ij}$ and $b_{ij}$
from the $16$ scalar equations in (\ref{eq:P=AB}).
We find that this ideal is generated by the $4 \times 4$-determinant
that defines $\mathcal{V}_3$
together with four sextics such as
$$
\begin{matrix}
 p_{11} p_{21} p_{22} p_{32} p_{33} p_{43}
-p_{11} p_{21} p_{22} p_{33}^2 p_{42}
-p_{11} p_{21} p_{23} p_{32}^2 p_{43}
+p_{11} p_{21} p_{23} p_{32} p_{33} p_{42}
-p_{11} p_{22}^2 p_{31} p_{33} p_{43} \\
+p_{11} p_{22} p_{23} p_{31} p_{32} p_{43}
+p_{11} p_{22} p_{23} p_{31} p_{33} p_{42}
-p_{11} p_{23}^2 p_{31} p_{32} p_{42}
+p_{12} p_{21} p_{22} p_{33}^2 p_{41}
{-}p_{12} p_{21} p_{23} p_{32} p_{33} p_{41} \\
-p_{12} p_{22} p_{23} p_{31} p_{33} p_{41}
+p_{12} p_{23}^2 p_{31} p_{32} p_{41}
+p_{13} p_{21}^2 p_{32}^2 p_{43}
-p_{13} p_{21}^2 p_{32} p_{33} p_{42}
-2 p_{13} p_{21} p_{22} p_{31} p_{32} p_{43}\\
+p_{13} p_{21} p_{22} p_{31} p_{33} p_{42}
+p_{13} p_{21} p_{23} p_{31} p_{32} p_{42}
+p_{13} p_{22}^2 p_{31}^2 p_{43}
-p_{13} p_{22} p_{23} p_{31}^2 p_{42}.
\end{matrix}
$$
What needs to be studied now is the ML degree
of this codimension $2$ subvariety of $\PP^{15}$, and
the approach of \cite{Huh} would lead us to look at the topology
of the associated very affine~variety.

\smallskip

Described above is the geometry of the MLE problem
for the mixture model  ${\rm Mix}_r $ regarded as a subset
 of the ambient simplex $\Delta_{mn-1}$.
Statisticians, on the other hand, are more accustomed to working in the
space of model parameters, which is the product of simplices
\begin{equation}
\label{eq:paraspace}
(\Delta_{m-1})^r \times \Delta_{r-1} \times (\Delta_{n-1})^r.
\end{equation}
Here our parameters are $(A, \Lambda, B)$.
The model ${\rm Mix}_r$ is the image of this parameter space in $\Delta_{mn-1}$
under the map (\ref{eq:mixequation}).
That parametrization is very far from
 identifiable. The reason is that the fibers of
$ (A,\Lambda,B) \mapsto P$ are  semi-algebraic sets
of possibly large dimension. In fact, the whole point of the paper \cite{MSS}
is to study the topology of these fibers as $P$ varies.

The expectation-maximization (EM) algorithm is the local method of choice
for finding the MLE on the mixture model ${\rm Mix}_r$. Our readers might enjoy
the exposition given in \cite[\S 1.3]{ASCB}. We emphasize
that the EM algorithm operates entirely in the parameter space
(\ref{eq:paraspace}). The likelihood function
$\ell_U$ pulls back to a function on the interior of (\ref{eq:paraspace}).
The EM algorithm is an iterative method that converges to a critical
point of that function, and, under some mild regularity hypotheses, that
critical point $(\widehat A, \widehat \Lambda, \widehat B)$ is then a local maximum.
The image $\widehat P$ of the point in ${\rm Mix}_r$ is then
a candidate for the global maximum of $\ell_U$ on ${\rm Mix}_r$.

\begin{example}
\label{ex:em45b} \rm
We tried the EM Algorithm also for $r=3$
on the $4 \times 5$ data matrix $U$ in
Examples \ref{ex:4x5numeric} and \ref{ex:em45}.
We ran $10000$ iterations with
starting points sampled from the uniform distribution
on the $23$-dimensional parameter polytope
$\, (\Delta_3)^3 \,
 \times \,\Delta_2  \times (\Delta_4)^3$.
From these $10000$ runs of the EM algorithm,
$9997$ converged to one of
eight local~maxima. Three of the runs led to other fixed points.
The following six local maxima are precisely the solutions already found in
Example \ref{ex:4x5numeric}.
We note that, in this particular instance, it happened that
all local maxima in the rank  model $\mathcal{M}_3$ actually
lie in ${\rm Mix}_3$, i.e.~they have non-negative rank $3$:
$$
\begin{matrix}
\smallskip
 \,\, \,\, \hbox{3521 occurrences:}  & \begin{tiny}
\begin{bmatrix}
     0.005321 & 0.00001322 & 0.00001322 & 0.02226 & 0.00002039\\
     0.00005285 & 0.3117 & 0.00006607 & 0.00003964 & 0.00001321\\
     0.00007929 & 0.00003964 & 0.5447 & 0.00003964 & 0.00002643\\
      0.02227 & 0.00007927 & 0.00002642 & 0.09316 & 0.00008532
\end{bmatrix}
\end{tiny} & {\rm log}(\ell_U) \, = \,  -84649.67679 \\

\smallskip
 \,\, \,\, \hbox{2293 occurrences:}  & \begin{tiny}
\begin{bmatrix}
     0.002244 & 0.02535 & 0.00001324 & 0.00001333 & 0.0000054 \\
      0.02535 & 0.2863 & 0.00006606 & 0.00003961 & 0.00006065\\
     0.00007929 & 0.00003964 & 0.5447 & 0.00003964 & 0.00002643\\
     0.00005291 & 0.00007928 & 0.00002643 & 0.1154 & 0.00005289
\end{bmatrix}
\end{tiny} & {\rm log}(\ell_U) \, = \,   -86583.69000 \\

\smallskip
 \,\, \,\, \hbox{1678 occurrences:}  & \begin{tiny}
\begin{bmatrix}
     0.001332 & 0.00001326 & 0.02627 & 0.00001341 & 0.0000038 \\
     0.00005289 & 0.3117 & 0.00006607 & 0.00003964 & 0.00001322\\
      0.02628 & 0.00003963 & 0.5185 & 0.00003961 & 0.00007538\\
     0.00005296 & 0.00007928 & 0.00002642 & 0.1154 & 0.00005292
\end{bmatrix}
\end{tiny} & {\rm log}(\ell_U) \, = \,  -87698.20128 \\

\smallskip
 \,\, \,\, \hbox{1320 occurrences:}  & \begin{tiny}
\begin{bmatrix}
    0.02754 & 0.00001320 & 0.00001321 & 0.00001326 & 0.00005298\\
      0.00005277 & 0.2274 & 0.00006606 & 0.08423 & 0.00004806\\
     0.00007928 & 0.00003964 & 0.5447 & 0.00003964 & 0.00002643\\
      0.00005310 & 0.08430 & 0.00002643 & 0.03122 & 0.00001788\\
\end{bmatrix}
\end{tiny} & {\rm log}(\ell_U) \, = \,  -98171.25551 \\

\smallskip
 \,\, \,\, \hbox{576 occurrences:}  & \begin{tiny}
\begin{bmatrix}
    0.02754 & 0.00001321 & 0.00001320 & 0.00001330 & 0.00005305\\
     0.00005285 & 0.3117 & 0.00006605 & 0.00003968 & 0.00001322\\
      0.00007916 & 0.00003964 & 0.4495 & 0.09526 & 0.00006519\\
      0.00005324 & 0.00007932 & 0.09528 & 0.02019 & 0.00001389
\end{bmatrix}
\end{tiny} & {\rm log}(\ell_U) \, = \,  -102495.4349 \\

\smallskip
 \,\, \,\, \hbox{68 occurrences:}  & \begin{tiny}
\begin{bmatrix}
    0.02754 & 0.00001322 & 0.00001321 & 0.00001321 & 0.00005285\\
       0.00005287 & 0.1135 & 0.1983 & 0.00003968 & 0.00001444\\
       0.00007927 & 0.1983 & 0.3465 & 0.00003962 & 0.00002520\\
     0.00005285 & 0.00007930 & 0.00002642 & 0.1154 & 0.00005285
\end{bmatrix}
\end{tiny} & {\rm log}(\ell_U) \, = \,  -121802.8945 \\
\end{matrix}
$$
In addition, our runs of the EM algorithm discovered the two local maxima
$$
\begin{matrix}
\smallskip
 \,\, \,\, \hbox{488 occurrences:}  & \begin{tiny}
\begin{bmatrix}
      0.001678 & 0.01892 & 0.00001325 & 0.007008 & 0.0000072 \\
        0.01894 & 0.2136 & 0.00006605 & 0.07912 & 0.00008149\\
     0.00007930 & 0.00003964 & 0.5447 & 0.00003964 & 0.00002643\\
       0.007023 & 0.07921 & 0.00002643 & 0.02933 & 0.00003021
\end{bmatrix}
\end{tiny} & {\rm log}(\ell_U) \, = \,  -105973.4859 \\

\smallskip
 \,\, \,\, \hbox{53 occurrences:}  & \begin{tiny}
\begin{bmatrix}
      0.001111 & 0.00001341 & 0.02187 & 0.004634 & 0.0000053 \\
     0.00005299 & 0.3117 & 0.00006602 & 0.00003976 & 0.00001324\\
        0.02191 & 0.00003960 & 0.4314 & 0.09144 & 0.0001046\\
       0.004647 & 0.00007935 & 0.09148 & 0.01939 & 0.00002219
\end{bmatrix}
\end{tiny} & {\rm log}(\ell_U) \, = \,  -111172.6663 \\
\end{matrix}
$$
These do  not satisfy the likelihood equations.
They are located on the boundary of ${\rm Mix}_3$.
\qed
\end{example}

It would be very interesting to carefully analyze the (algebraic) geometry
of the EM algorithm, even in the small cases of Theorem \ref{thm:MLvalues}.
A comparison with the methods introduced in this paper will then allow us to
ascertain  the conditions under which EM finds the global maximum, as it did in
Example \ref{ex:em45b}.
A project by Elina Robeva on this topic is under way.

\bigskip \medskip

\noindent {\bf Acknowledgments.}
We thank Serkan Ho\c sten for helpful comments.
The authors were supported by the National Science Foundation
(DMS-1262428, DMS-0943745, DMS-0968882).

\bigskip
\bigskip

\begin{small}

\noindent
Authors' adresses:

\bigskip

\noindent
Jonathan Hauenstein,
Department of Mathematics, North Carolina State University, Raleigh, NC 27695, USA,
{\tt hauenstein@ncsu.edu}

\medskip

\noindent
Jose Rodriguez and Bernd Sturmfels,
Department of Mathematics, University of California, Berkeley,  CA 94720, USA,
{\tt {jo.ro@berkeley.edu}}, {\tt {bernd@math.berkeley.edu}}
\end{small}

\end{document}